\newtheorem{theorem}{Theorem}[section]
\newtheorem{corollary}[theorem]{Corollary}
\newtheorem{lemma}[theorem]{Lemma}
\newtheorem{proposition}[theorem]{Proposition}
\theoremstyle{definition}
\newtheorem{definition}[theorem]{Definition}
\theoremstyle{remark}
\newtheorem{remark}{Remark}
\newtheorem{example}{Example}
\newcommand{\norm}  [1]{\ensuremath{\left  \|      #1  \right \|       }}
\newcommand{\sqb}   [1]{\ensuremath{\left [        #1  \right ]        }}
\newcommand{\cb}    [1]{\ensuremath{\left  \{      #1  \right \}       }}
\newcommand{\of}    [1]{\ensuremath{\left (        #1  \right )        }}
\newcommand{\scp}   [1]{\ensuremath{\left \langle  #1  \right \rangle  }}
\newcommand{\st} {\ensuremath{\; | \;}}
\newcommand{\X}{\mathcal{X}}
\newcommand{\C}{\mathcal{C}}
\newcommand{\D}{\mathcal{D}}
\newcommand{\R}{\mathrm{I\negthinspace R}}
\renewcommand{\P}{\mathcal{P}}
\newcommand{\N}{\mathrm{I\negthinspace N}}
\newcommand{\cl}{{\rm cl\,}}
\newcommand{\cone}{{\rm cone\,}}
\newcommand{\Int}{{\rm int\,}}
\newcommand{\leer}[1]{}
\DeclareMathOperator{\wMin}{wMin}
\DeclareMathOperator{\ri}{ri}
\DeclareMathOperator*{\id}{ id}
\DeclareMathOperator*{\Min}{ Min}
\DeclareMathOperator*{\rMin}{ rMin}
\DeclareMathOperator*{\epi}{ epi}
\DeclareMathOperator*{\dom}{ dom}
\DeclareMathOperator*{\ind}{ Ind}
\DeclareMathOperator*{\Liminf}{ Liminf}
\DeclareMathOperator*{\Limsup}{ Limsup}
\DeclareMathOperator*{\conv}{ conv}
\newcommand{\OLR}{\overline{\mathrm{I\negthinspace R}}}
\title{Geometric Duality for Convex Vector Optimization Problems}
\author{Frank Heyde\thanks{University of Graz, Institute of Mathematics and Scientific Computing, Heinrichstr. 36, A-8020 Graz, Austria (frank.heyde@uni-graz.at)}}
\begin{document}
\maketitle

\begin{abstract}
Geometric duality theory for multiple objective linear programming problems turned out to be very useful for the development of efficient algorithms to generate or approximate the whole set of nondominated points in the outcome space. This article extends the geometric duality theory to convex vector optimization problems.\\[1em]
{\bf Key words.} geometric duality theory, vector optimization, Legendre-Fenchel conjugate, second-order subdifferential, Dupin indicatrix\\[1em]
{\bf AMS subject classifications.} 52A41, 52A20, 90C46, 90C29
\end{abstract}

\section{Introduction}

Recently, a geometric duality theory for multiple objective linear programming problems was introduced in 
\cite{HeyLoe2008}. This theory deals with a duality relation between
the polyhedral extended image sets of a primal and a dual vector optimization
problem, which is similar to duality of polytopes, by providing an inclusion reversing
one-to-one map between the set of all maximal proper faces of the dual image and the set
of all weakly minimal proper faces of the primal image. Moreover, the dimensions of the corresponding faces of the 
primal and dual image are complementary in the sense that they always add up to the same value.

This kind of duality theory proved to be very useful in developing algorithms to generate or approximate the whole 
set of (weakly) minimal points of a vector optimization problem in the objective space. Ehrgott et al. 
\cite{ELS11} used geometric duality in order to obtain slight improvements of Benson's outer approximation 
algorithm and to develop a dual variant of that algorithm. They point out that algorithms working in the outcome 
space are often much faster than algorithms working in the decision space as, in typical applications, the dimension of the outcome space 
is much smaller than the dimension of the decision space (there are fewer objectives than variables). 
%Moreover, it seems to be more natural to compare criteria values rather than the decisions leading to them when 
%solving practical problems. 
In L\"ohne's book \cite{Loehne2011} a detailed description of the algorithms and some extensions can be found.

L\"ohne and Rudloff \cite{LoeRud11} point out that geometric duality also plays a role in Mathematical Finance. In 
fact, the superhedging portfolios in markets with transaction costs can be computed by solving a sequence of linear 
vector optimization problems. L\"ohne and Rudloff introduce an algorithm for solving them based on Benson's outer 
approximation algorithm and they show that this algorithm is related to existing ones via geometric duality.

In the present article, the geometric duality theory will be generalized to vector optimization problems where the 
extended image sets don't need to be polyhedral, they merely need to be convex and satisfy some second-order 
subdifferentiability condition. Moreover, this theory can deal with a general nontrivial ordering cone as needed, 
e.g., in \cite{LoeRud11}. The ordering cones neither need to be polyhedral nor need to have nonempty interior. 

The paper is structured as follows.
Section \ref{sect:prelim} collects some preliminaries from convex analysis, about minimal points with respect to a vector preorder and faces of convex sets.
In order to construct the inclusion reversing one-to-one map we show in section \ref{sect:epi} how such a map between the minimal faces of the epigraph of a proper closed convex function $f$ and the minimal faces of the epigraph of its Legendre-Fenchel conjugate $f^*$ can be obtained.
Section \ref{sect:secondorder} shows how a polarity relation between the second-order subdifferentials of $f$ and $f^*$ generalizes the complementarity condition for the dimensions of the corresponding faces in the linear case.
Finally, we show in section \ref{sect:vectopt} how the extended image of a convex vector optimization problem can be transformed into the epigraph of a suitable function $f$, the dual problem will be derived by an appropriate transformation of the epigraph of $f^*$ and geometric duality relations between the primal and the dual problem will be derived from the results in the previous sections. Moreover, we derive geometric duality relations for linear vector optimization problems with general ordering cone, that slightly extend the results in \cite{HeyLoe2008}, as a special case of the general theory.

\section{Preliminaries}\label{sect:prelim}

\subsection{Convex Analysis}

First, we will shortly collect the basic concepts and results of convex analysis for extended real valued functions. For further reference the reader is advised to consult any standard text book about convex analysis (e.g., \cite{Rockafellar72}, \cite{Zalinescu}).

Let $f:\R^n \to \OLR:=\R\cup\cb{+\infty,-\infty}$ be an extended real-valued function.
The set
\[
\dom f := \cb{x\in \R^n \st f(x) \neq +\infty}
\]
is called the {\em domain} of $f$ and the set 
\[
\epi f := \cb{(x,r)\in\R^n\times \R \st f(x) \le r}
\]
is called the {\em epigraph} of $f$. 
A function $f: \R^n \to \OLR$ is called {\em convex} if $\epi f$ is a convex set, $f$ is called {\em closed} if $\epi f$ is a closed set. Moreover, $f$ is called {\em proper} if $\dom f \neq\emptyset$ and
$f(x)\neq -\infty$ for all $x\in\R^n$.  

The {\em Legendre-Fenchel conjugate} of $f$, a function $f^*:\R^n \to \OLR$, is defined as 
\[
f^*(u):=\sup_{x\in \R^n}\of{\scp{x,u}-f(x)}
\]
where $\scp{\cdot,\cdot}$ denotes the standard Euklidean inner product in $\R^n$. 
The function $f^*$ is always a closed and convex function. If $f$ is a proper closed convex function then $f^*$ is proper as well and $(f^*)^*=f$.
Moreover, if $f$ is proper the inequality $\scp{x,u}\le f(x) + f^*(u)$, called Young-Fenchel inequality, holds true for all $x,u \in \R^n$.

The {\em directional derivative} of a proper convex function $f$ at a point $x_0\in\dom f$ in direction $x\in\R^n$ is defined as
\[
f'(x_0;x):= \inf\cb{\frac{f(x_0+tx)-f(x_0)}{t} \st t>0}.
\]
The {\em subdifferential} of a proper convex function $f$ at a point $x_0\in\dom f$, a subset $\partial f(x_0)$ of $\R^n$, can be defined in three equivalent ways
\begin{align*}
\partial f(x_0) &= \cb{u\in\R^n \st \forall x\in \R^n : f(x) \ge f(x_0) + \scp{u,x-x_0}} \\
								&= \cb{u\in\R^n \st \forall x\in \R^n : \scp{u,x}\le f'(x_0;x)} \\
								&= \cb{u\in\R^n \st \scp{x_0,u} = f(x_0)+f^*(u)}.
\end{align*}
If $f$ is proper closed convex, then from $(f^*)^*=f$ and the last characterization of $\partial f$ one easily concludes that
\[u\in \partial f(x) \qquad \Longleftrightarrow \qquad x\in\partial f^*(u).\]

The {\em normal cone} of a convex subset $A\subseteq \R^n$ at a point $x_0\in A$ is defined by
\[
\mathcal{N}_A(x_0):=\cb{u\in\R^n \st \forall x\in A : \scp{u,x-x_0} \le 0}.
\]
The subdifferential of a proper convex function $f$ at a point $x_0\in \dom f$ can be characterized by the normal cone of $\epi f$ at the point $(x_0,f(x_0))$ in the following way
\[
u \in \partial f(x_0) \quad\Leftrightarrow\quad (u,-1)\in\mathcal{N}_{\epi f}(x_0,f(x_0)).
\]
The {\em polar} of a set $A\subseteq \R^n$ is the set $A^\circ \subseteq \R^n$ defined by
\[
A^\circ = \cb{u\in \R^n \st \forall x\in A : \scp{u,x}\le 1}.
\]
The set $A^\circ$ is always a closed convex set containing the origin. We have $(A^\circ)^\circ = A$ if and only if $A$ is a closed convex set containing the origin.

\subsection{Supporting Hyperplanes and Exposed Faces of a Convex Set}

Let $v\in\R^q\setminus\cb{0}$ and $\alpha\in \R$. The set $H(v,\alpha):=\cb{y\in\R^q \st \scp{v,y}=\alpha}$ is a hyperplane in $\R^q$. We say that $H(v,\alpha)$ is a {\em supporting hyperplane} to a set $A\subseteq\R^q$ iff $A\cap H(v,\alpha)\neq\emptyset$ and $A\subseteq\cb{y\in \R^q \st \scp{v,y}\le \alpha}$. 

Let $A\subseteq \R^q$ be a convex set. A convex subset $F\subseteq A$ is called a {\em face} of $A$ if
\[
\of{y^1,y^2\in A ,\quad \lambda\in (0,1),\quad \lambda y^1 + (1-\lambda) y^2\in F} \quad\Rightarrow\quad y^1,y^2\in F.
\]
A face $F$ of $A$ is called {\em proper} if $\emptyset\neq F\neq A$.
A set $E\subseteq A$ is called an {\em exposed face} of $A$ if there is a supporting hyperplane $H(v,\alpha)$ to $A$ such that $E =H(v,\alpha)\cap A$. If $\dim A = q$ then each exposed face of a convex set $A$ is a proper face of $A$ as well. For polyhedral convex sets also the converse is true.

\subsection{Minimal and Weakly Minimal Points}

Let $C\subseteq \R^q$ be a closed convex cone.
We say that $y\in A$ is a minimal point of $A\subseteq \R^q$ with respect to $C$ if $\of{\cb{y}-C\setminus (-C)}\cap A =\emptyset$. The set of all minimal points of a set $A$ with respect to $C$ is denoted by $\Min_C A$, i.e.,
\[
\Min\nolimits_{C}A := \cb{y\in A \st \of{\cb{y}-C\setminus (-C)}\cap A =\emptyset}.
\]
If $C$ has nonempty interior then we say that $y\in A$ is a weakly minimal point of $A\subseteq \R^q$ with respect to $C$ if $\of{\cb{y}-\Int C}\cap A =\emptyset$. The set of all weakly minimal points of a set $A$ with respect to $C$ is denoted by $\wMin_C A$, i.e.,
\[
\wMin\nolimits_{C}A := \cb{y\in A \st \of{\cb{y}-\Int C}\cap A =\emptyset}.
\]

\section{Geometric Duality Map for Epigraphs}\label{sect:epi}

Throughout this section we assume that $f:\R^n\to \OLR$ is a proper closed convex function and 
\[
K=\cb{(x,r)\in \R^n\times \R \st x=0,r\ge 0}.
\]
In this section we will show how an inclusion-reversing one-to-one map between $K$-minimal exposed faces of the epigraph of $f$ and of the epigraph of the Legendre-Fenchel conjugate $f^*$ can be obtained.
Here a proper face is called $K$-minimal if all of its points are minimal with respect to $K$.

Since exposed faces are obtained by supporting hyperplanes we will collect some properties of supporting hyperplanes to $\epi f$. 

\begin{lemma}\label{LemSuppHP}
(i) If $H(u,s,\alpha)$ is a supporting hyperplane to $\epi f$, then $s\le 0$.

(ii) If $H(u,s,\alpha)$ is a supporting hyperplane to $\epi f$, then $H(u,s,\alpha)\cap (\epi f)$ is $K$-minimal in $\epi f$ if and only if $s<0$.

(iii) $H(u,-1,\alpha)$ is a supporting hyperplane to $\epi f$ if and only if $\partial f^*(u)\neq\emptyset$ and $\alpha =f^*(u)$.
\end{lemma}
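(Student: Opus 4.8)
The plan is to work directly from the defining inequality of a supporting hyperplane together with the characterizations of the subdifferential and the Legendre–Fenchel conjugate collected in the preliminaries. Recall that $H(u,s,\alpha)$ denotes the hyperplane $\cb{(x,r)\in\R^n\times\R \st \scp{u,x}+sr=\alpha}$, and being a supporting hyperplane to $\epi f$ means $(\epi f)\cap H(u,s,\alpha)\neq\emptyset$ and $\scp{u,x}+sr\le\alpha$ for all $(x,r)\in\epi f$.

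For (i), I would fix any $(x,r)\in\epi f$; then $(x,r+t)\in\epi f$ for every $t\ge 0$ since the epigraph is upward-closed in the last coordinate. The supporting inequality gives $\scp{u,x}+s(r+t)\le\alpha$ for all $t\ge 0$, and letting $t\to+\infty$ forces $s\le 0$. For (iii), I would use (i) to know $s\le 0$ and then treat the normalized case $s=-1$ directly: the inequality $\scp{u,x}-r\le\alpha$ holding for all $(x,r)\in\epi f$ is equivalent (taking $r=f(x)$, which is optimal) to $\scp{u,x}-f(x)\le\alpha$ for all $x\in\dom f$, i.e.\ $f^*(u)\le\alpha$; and the contact condition $(\epi f)\cap H(u,-1,\alpha)\neq\emptyset$ says precisely that the supremum defining $f^*(u)$ is attained, so $f^*(u)=\alpha$ and $f^*(u)<+\infty$. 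Attainment of the sup at some $x_0$ means $\scp{x_0,u}=f(x_0)+f^*(u)$, which by the third characterization of the subdifferential is exactly $u\in\partial f(x_0)$, equivalently $x_0\in\partial f^*(u)$; conversely, if $\partial f^*(u)\neq\emptyset$ then picking $x_0\in\partial f^*(u)$ (so $u\in\partial f(x_0)$ and $f^*(u)$ is finite, $f$ being proper closed convex) yields both the contact point $(x_0,f(x_0))$ and, via the Young–Fenchel inequality $\scp{x,u}\le f(x)+f^*(u)$, the global inequality with $\alpha=f^*(u)$.

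For (ii), the direction $s<0$: after normalizing to $s=-1$, I claim $E:=H(u,-1,\alpha)\cap(\epi f)$ consists only of $K$-minimal points. If $(x,r)\in E$ and $(x,r)-(0,t)\in\epi f$ for some $t>0$ (the only way to move in $-K\setminus(-K)$... more precisely, a point of $\of{\cb{(x,r)}-K\setminus(-K)}$ is $(x,r-t)$ with $t>0$), then $\scp{u,x}-(r-t)=\alpha+t>\alpha$, contradicting the supporting inequality; hence $\of{\cb{(x,r)}-(K\setminus(-K))}\cap\epi f=\emptyset$, so $(x,r)\in\Min_K(\epi f)$. For the converse, suppose $s=0$, so $H(u,0,\alpha)$ is a vertical supporting hyperplane; I would show $E$ then contains a non-$K$-minimal point. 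Since $E\neq\emptyset$, take $(x,r)\in E$; then $\scp{u,x}=\alpha$ and, because the hyperplane is vertical, $(x,r+t)\in E$ for all $t\ge 0$ as well, while $(x,r)=(x,(r+1)-1)\in\of{\cb{(x,r+1)}-K\setminus(-K)}\cap\epi f$, so $(x,r+1)\in E$ is not $K$-minimal. (Here $s\le 0$ by (i), so $s=0$ is the only alternative to $s<0$, and if $u=0$ then no hyperplane $H(0,s,\alpha)$ exists, consistent with $v\neq 0$ in the definition.)

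The main obstacle I anticipate is purely bookkeeping: getting the normalization $s<0\Rightarrow s=-1$ cleanly (dividing the equation of the hyperplane by $-s$ and renaming $\alpha$), and being careful in (ii) about the set $K\setminus(-K)$ — since $-K=\cb{(0,r) \st r\le 0}$ and $K\cap(-K)=\cb{0}$, we have $K\setminus(-K)=\cb{(0,t) \st t>0\}$, so "$-(K\setminus(-K))$" perturbations are exactly downward shifts by $t>0$, which is what makes the $s<0$ argument work and the $s=0$ case fail. No deep idea is needed beyond the conjugacy/subdifferential dictionary already in Section~\ref{sect:prelim}.
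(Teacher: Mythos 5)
Your proposal is correct and follows essentially the same route as the paper's proof: exploiting upward-closedness of $\epi f$ for (i), the sign of $s$ against downward perturbations $(x,r-t)$ for (ii), and the conjugacy/subdifferential dictionary (attainment of the supremum defining $f^*(u)$ plus Young--Fenchel) for (iii). The only differences are cosmetic, e.g.\ normalizing to $s=-1$ in (ii) and letting $t\to\infty$ in (i) instead of using the contact point with a single unit step.
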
 
\begin{proof}
(i) Let $(x,r)\in (\epi f)\cap H(u,s,\alpha)$, i.e., $\scp{x,u}+rs=\alpha$ and $f(x)\le r$. Then $(x,r+1)\in \epi f$ hence 
$\scp{x,u}+(r+1)s \le\alpha$ which in turn implies $s\le 0$.

(ii) Let $s<0$ and assume that there is some $(x,r)\in H(u,s,\alpha)\cap (\epi f)$ that is not $K$-minimal in $\epi f$. Then there exists some $\delta >0$ with $(x,r-\delta)\in \epi f$. $(x,r)\in H(u,s,\alpha)$ implies $\scp{x,u}+rs=\alpha$ hence $\scp{x,u}+(r-\delta)s>\alpha$, a contradiction to the supporting hyperplane property.

If, on the other hand, $s=0$ ($s>0$ is impossible due to (i)) and $(x,r)\in H(u,s,\alpha)\cap (\epi f)$. Then $(x,r+1)\in H(u,s,\alpha)\cap (\epi f)$ as well and $(x,r+1)$ is not $K$-minimal in $\epi f$.

(iii) $\partial f^*(u)\neq\emptyset$ and $\alpha =f^*(u)$ is equivalent to the existence of some $\bar{x}\in X$ with $\alpha =f^*(u)=\scp{\bar x,u}-f(\bar x)$ which in turn is equivalent to $H(u,-1,\alpha)$ being a supporting hyperplane to $\epi f$ due to the definition of $f^*$.
\end{proof}
\begin{proposition}\label{PropExpFaces}
A subset $F\subseteq\epi f$ is a $K$-minimal exposed face of $\epi f$ iff there is some $\bar u\in\dom f^*$ with $\partial f^*(\bar u)\neq \emptyset$ such that
\[
F=\cb{(x,f(x))\in \R^n\times \R \st \bar u\in\partial f(x)}.
\]
Moreover, $F^*$ is a $K$-minimal exposed face of $\epi f^*$ iff there is some $\bar{x}\in\dom f$ with $\partial f(\bar{x})\neq \emptyset$ such that
\[F^*=\cb{(u,f^*(u))\in\R^{n}\times \R \st u\in\partial f(\bar{x})}.\]
\end{proposition}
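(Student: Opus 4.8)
The plan is to reduce everything to Lemma~\ref{LemSuppHP}. By definition, a $K$-minimal exposed face $F$ of $\epi f$ has the form $F = H(u,s,\alpha)\cap(\epi f)$ for some supporting hyperplane $H(u,s,\alpha)$ to $\epi f$ (with $(u,s)\in\R^n\times\R$, $\alpha\in\R$), all of whose points are $K$-minimal in $\epi f$. First I would apply Lemma~\ref{LemSuppHP}(ii): $K$-minimality of $F$ forces $s<0$. Since $H(u,s,\alpha)$ is unchanged when $(u,s,\alpha)$ is multiplied by a positive scalar, I may rescale so that $s=-1$, i.e.\ $F = H(\bar u,-1,\alpha)\cap(\epi f)$ for suitable $\bar u,\alpha$. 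Lemma~\ref{LemSuppHP}(iii) then yields $\partial f^*(\bar u)\neq\emptyset$ and $\alpha = f^*(\bar u)$; note that $\partial f^*(\bar u)\neq\emptyset$ together with properness of $f^*$ and $(f^*)^*=f$ forces $f^*(\bar u)\in\R$, hence $\bar u\in\dom f^*$.

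The key step is the set identity
\[
H(\bar u,-1,f^*(\bar u))\cap(\epi f) = \cb{(x,f(x))\in\R^n\times\R \st \bar u\in\partial f(x)}.
\]
A pair $(x,r)$ lies in the left-hand side iff $\scp{x,\bar u}-r = f^*(\bar u)$ and $f(x)\le r$; eliminating $r$ yields $\scp{x,\bar u}\ge f(x)+f^*(\bar u)$, which combined with the Young-Fenchel inequality gives the equality $\scp{x,\bar u} = f(x)+f^*(\bar u)$. This equality forces $x\in\dom f$ (otherwise the right side would be $+\infty$), whence $r=f(x)$, and by the third characterization of the subdifferential it is precisely the condition $\bar u\in\partial f(x)$; conversely every such $x$ produces the point $(x,f(x))$ in the intersection. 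Combining this identity with the reduction above settles both implications: from a $K$-minimal exposed face we extract $\bar u$ and obtain the stated description, and given $\bar u$ with $\partial f^*(\bar u)\neq\emptyset$, Lemma~\ref{LemSuppHP}(iii) shows $H(\bar u,-1,f^*(\bar u))$ supports $\epi f$, the identity shows the resulting exposed face equals the displayed set, and Lemma~\ref{LemSuppHP}(ii) (with $s=-1<0$) shows it is $K$-minimal.

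For the ``Moreover'' part I would apply the first part of the proposition with $f$ replaced by $f^*$ (again a proper closed convex function), using $(f^*)^*=f$: this says $F^*$ is a $K$-minimal exposed face of $\epi f^*$ iff $F^* = \cb{(u,f^*(u)) \st \bar x\in\partial f^*(u)}$ for some $\bar x$ with $\partial (f^*)^*(\bar x) = \partial f(\bar x)\neq\emptyset$. It then remains only to rewrite $\bar x\in\partial f^*(u)$ as $u\in\partial f(\bar x)$ via the inversion rule $u\in\partial f(x)\Leftrightarrow x\in\partial f^*(u)$ (valid since $f$ is proper closed convex), which puts $F^*$ into the form displayed in the proposition.

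The whole argument is bookkeeping on top of Lemma~\ref{LemSuppHP}; the only place needing a little care is the set identity, specifically the observation that the Young-Fenchel inequality collapses the epigraph constraint to the subdifferential equation and automatically excludes $x\notin\dom f$. Beyond keeping the normalization $s\mapsto -1$ and the two conjugacy identities $(f^*)^*=f$ and $u\in\partial f(x)\Leftrightarrow x\in\partial f^*(u)$ straight, there is no real obstacle.
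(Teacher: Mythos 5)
Your proposal is correct and follows essentially the same route as the paper: the core is the identity $\cb{(x,f(x)) \st \bar u\in\partial f(x)}=H(\bar u,-1,f^*(\bar u))\cap(\epi f)$ obtained via the Young--Fenchel inequality, combined with Lemma~\ref{LemSuppHP}, and the ``Moreover'' part by symmetry through $u\in\partial f(x)\Leftrightarrow x\in\partial f^*(u)$. You merely spell out the rescaling to $s=-1$ and the two directions more explicitly than the paper does.
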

\begin{proof}
We have
\begin{align*}
\cb{(x,f(x))\in \R^n\times \R \st \bar u\in\partial f(x)}&=\cb{(x,r)\in \R^n\times \R \st r=f(x), \scp{x,\bar u}=f(x)+f^*(\bar u}\\
&=\cb{(x,r)\in \R^n\times \R \st r\ge f(x),\; \scp{x,\bar u}-r=f^*(\bar u)}\\
&=H(\bar u,-1,f^*(\bar u))\cap (\epi f)
\end{align*}
where the $\supseteq$-relation in the second equality follows from the Young-Fenchel inequality. Hence the first statement follows from Lemma \ref{LemSuppHP}. The second statement can be proven analogously taking into account that $u\in\partial f(x)$ iff $x\in \partial f^*(u)$ for a proper closed convex function $f$.
\end{proof}
\begin{theorem}\label{TGD}   
The mapping $\Psi : 2^{\R^{n+1}}\to2^{\R^{n+1}}$ defined by 
\[
\Psi (F^*):=\bigcap_{(u,f^*(u))\in F^*}\cb{(x,f(x))\in\R^n\times \R \st u\in\partial f(x)}.
\]
is an inclusion reversing one-to-one mapping between $K$-minimal exposed faces of $\epi f^*$ and $K$-minimal exposed faces of $\epi f$. Its inverse mapping is given by
\[
\Psi^* (F):=\bigcap_{(x,f(x))\in F}\cb{(u,f^*(u))\in\R^n\times \R \st u\in\partial f(x)}
\] 
\end{theorem}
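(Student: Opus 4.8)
The plan is to use Proposition~\ref{PropExpFaces} to coordinatize the two families of faces, then (i) identify $\Psi(F^*)$ explicitly as a \emph{single} exposed face of $\epi f$, (ii) verify by a direct set computation that $\Psi$ and $\Psi^*$ are mutually inverse, and (iii) observe that inclusion reversal is automatic. For $\bar u$ with $\partial f^*(\bar u)\neq\emptyset$ write $G(\bar u):=\cb{(x,f(x))\st \bar u\in\partial f(x)}$, and for $\bar x$ with $\partial f(\bar x)\neq\emptyset$ write $G^*(\bar x):=\cb{(u,f^*(u))\st u\in\partial f(\bar x)}$; by Proposition~\ref{PropExpFaces} these are exactly the $K$-minimal exposed faces of $\epi f$ and of $\epi f^*$, respectively. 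Unwinding the definitions and using $u\in\partial f(x)\Leftrightarrow x\in\partial f^*(u)$,
\[
\Psi(G^*(\bar x))=\bigcap_{u\in\partial f(\bar x)}G(u)=\cb{(x,f(x))\st \partial f(\bar x)\subseteq\partial f(x)},\qquad \Psi^*(G(\bar u))=\cb{(u,f^*(u))\st \partial f^*(\bar u)\subseteq\partial f^*(u)}.
\]

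The crucial, least routine point is (i). Since an arbitrary intersection of exposed faces need not be exposed, I would show directly that $\Psi(G^*(\bar x))=G(\bar u)$ for any $\bar u\in\ri\partial f(\bar x)$; note $\ri\partial f(\bar x)\neq\emptyset$, and $\bar u\in\partial f(\bar x)$ forces $\bar x\in\partial f^*(\bar u)$, so $G(\bar u)$ is a genuine $K$-minimal exposed face. The inclusion ``$\subseteq$'' is immediate because $\bar u\in\partial f(\bar x)$. For ``$\supseteq$'', take $(x,f(x))$ with $\bar u\in\partial f(x)$ and an arbitrary $u\in\partial f(\bar x)$; the goal is $u\in\partial f(x)$. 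Put $h(v):=f(\bar x)+f^*(v)-\scp{\bar x,v}$ and $g(v):=f(x)+f^*(v)-\scp{x,v}$; by the Young--Fenchel inequality both are nonnegative, $h$ vanishes exactly on $\partial f(\bar x)$ and $g$ exactly on $\partial f(x)$ (third characterization of the subdifferential), and $\ell:=g-h$ is affine. Since $\bar u\in\ri\partial f(\bar x)$ and $u\in\partial f(\bar x)$, there are $t\in(0,1)$ and $w\in\partial f(\bar x)$ with $\bar u=tw+(1-t)u$ (a standard property of the relative interior of a convex set). Now $h(\bar u)=h(w)=h(u)=0$ and $g(\bar u)=0$, hence $\ell(\bar u)=0$, $\ell(w)=g(w)\ge 0$, $\ell(u)=g(u)\ge 0$; affinity gives $0=\ell(\bar u)=t\ell(w)+(1-t)\ell(u)$, which forces $\ell(u)=0$, that is $g(u)=0$, that is $u\in\partial f(x)$. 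Thus $\partial f(\bar x)\subseteq\partial f(x)$ and $\Psi(G^*(\bar x))=G(\bar u)$. Applying the identical argument to $f^*$ (again proper closed convex, with $(f^*)^*=f$) gives $\Psi^*(G(\bar u))=G^*(\bar x)$ for any $\bar x\in\ri\partial f^*(\bar u)$; in particular $\Psi$ maps the $K$-minimal exposed faces of $\epi f^*$ into those of $\epi f$, and $\Psi^*$ the other way.

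For (ii): $(\bar x,f(\bar x))\in\Psi(G^*(\bar x))$ since trivially $\partial f(\bar x)\subseteq\partial f(\bar x)$, so the factor indexed by $(\bar x,f(\bar x))$ in the intersection defining $\Psi^*(\Psi(G^*(\bar x)))$ already yields $\Psi^*(\Psi(G^*(\bar x)))\subseteq G^*(\bar x)$; conversely, every $u\in\partial f(\bar x)$ lies in $\partial f(x)$ for all $(x,f(x))\in\Psi(G^*(\bar x))$, so $(u,f^*(u))\in\Psi^*(\Psi(G^*(\bar x)))$. Hence $\Psi^*\circ\Psi=\id$ on the $K$-minimal exposed faces of $\epi f^*$, and symmetrically $\Psi\circ\Psi^*=\id$ on those of $\epi f$; with (i) this makes $\Psi$ a bijection with inverse $\Psi^*$. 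For (iii), if $F_1^*\subseteq F_2^*$ are $K$-minimal exposed faces of $\epi f^*$, the intersection defining $\Psi(F_2^*)$ runs over a larger index set than that for $\Psi(F_1^*)$, whence $\Psi(F_2^*)\subseteq\Psi(F_1^*)$, so $\Psi$ is inclusion reversing.

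I expect the main obstacle to be step (i), specifically squeezing usable information out of $\bar u\in\ri\partial f(\bar x)$: the device of introducing the two duality-gap functions $g,h$, noting that their difference is affine, and combining nonnegativity at the endpoints $w,u$ with vanishing at the interior point $\bar u$ is what drives the argument; everything else is bookkeeping with Proposition~\ref{PropExpFaces} and the definition of $\Psi$.
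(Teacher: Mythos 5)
Your proof is correct, and while its overall skeleton (coordinatize both families of faces via Proposition~\ref{PropExpFaces}, check $\Psi^*\circ\Psi=\id$ and $\Psi\circ\Psi^*=\id$ by the two elementary inclusions, note that inclusion reversal is automatic) coincides with the paper's, you handle the one genuinely delicate point by a different route. The paper establishes that $\Psi(F^*)$ is a $K$-minimal exposed face by observing it is a nonempty intersection of exposed faces and citing the general theorem that such intersections are again exposed (Webster, Theorem 2.6.17), with nonemptiness witnessed by $(\bar x,f(\bar x))$. You instead prove the sharper statement that the intersection collapses to a \emph{single} member of the family, $\Psi(G^*(\bar x))=G(\bar u)$ for any $\bar u\in\ri\partial f(\bar x)$, via the prolongation property of the relative interior combined with the observation that the two Young--Fenchel gap functions $g,h$ differ by the affine function $v\mapsto f(x)-f(\bar x)-\scp{x-\bar x,v}$ (well defined at the points used, since they all lie in $\partial f(\bar x)\subseteq\dom f^*$). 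What your route buys: it is self-contained, sidesteps any appeal to the exposedness of (possibly infinite) intersections of exposed faces, and yields the extra explicit information of \emph{which} exposed face $\Psi(F^*)$ is --- namely $\cb{(x,f(x))\st \bar u\in\partial f(x)}$ for a relative-interior subgradient $\bar u$ --- which is precisely the description the paper later rederives by hand in the polyhedral case of Section~\ref{subsect:PCF}. What the paper's route buys is brevity, at the cost of leaning on an external structural result about exposed faces.
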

\begin{proof}
(a) The mapping is inclusion reversing by definition.

(b) We will show that $\Psi(F^*)$ is a $K$-minimal exposed face of $\epi f$ and $\Psi^*(\Psi(F^*))=F^*$ whenever $F^*$ is a $K$-minimal exposed face of $\epi f^*$. If $F^*$ is a $K$-minimal exposed face of $\epi f^*$ then, by Proposition \ref{PropExpFaces}, there is some $\bar{x}\in\dom f$ with $\partial f(\bar{x})\neq \emptyset$ such that
\[
F^*=\cb{(u,f^*(u))\in\R^{n}\times \R \st u\in\partial f(\bar{x})}.
\]  
If $u\in \dom f^*$ with $\partial f^*(u)\neq \emptyset$ then $\cb{(x,f(x))\in\R^n\times \R \st u\in\partial f(x)}$ is a $K$-minimal exposed face of $\epi f$. Since the intersection of exposed faces is an exposed face again if it is nonempty (see \cite{Webster} Theorem 2.6.17), $\Psi (F^*)$ is a $K$-minimal exposed face of $\epi f$ whenever $\Psi (F^*)$ is nonempty. But this is true since $(\bar{x},f(\bar{x}))\in\Psi(F^*)$.

Moreover,
\begin{align*}
F^*&=\cb{(u,f^*(u))\in\R^{n}\times \R \st u\in\partial f(\bar x)}\\
&\supseteq\bigcap_{(x,f(x))\in \Psi(F^*)}\cb{(u,f^*(u))\in\R^n\times \R \st u\in\partial f(x)}
=\Psi^*(\Psi(F^*)).
\end{align*}
Next, we show that $F^*\subseteq \Psi^*(\Psi (F^*))$. Assume to the contrary that there is some $(u,f^*(u))\in F^*$ such that $(u,f^*(u))\not\in \Psi^*(\Psi (F^*))$. Hence there is some $(x,f(x))\in\Psi(F^*)$ such that $u\not\in\partial f(x)$. But this contradicts $(u,f^*(u))\in F^*$.

(c) We will show that $\Psi^*(F)$ is a $K$-minimal exposed face of $\epi f^*$ and $\Psi(\Psi^*(F))=F$ whenever $F$ is a $K$-minimal exposed face of $\epi f$. If $F$ is a $K$-minimal exposed face of $\epi f$ then, by Proposition \ref{PropExpFaces}, there is some $\bar u\in\dom f^*$ with $\partial f^*(\bar u)\neq \emptyset$ such that
\[
F=\cb{(x,f(x))\in \R^n\times \R \st \bar u\in\partial f(x)}.
\]
If $x\in \dom f$ with $\partial f({x})\neq \emptyset$ then $\cb{(u,f^*(u))\in \R^n\times \R \st u\in\partial f(x)}$ is a $K$-minimal exposed face of $\epi f^*$. Since the intersection of exposed faces is an exposed face again if it is nonempty, $\Psi^* (F)$ is a $K$-minimal exposed face of $\epi f^*$ whenever $\Psi^*(F)$ is nonempty. But this is true since $(\bar{u},f(\bar{u}))\in\Psi^*(F)$.

Moreover,
\begin{align*}
F&=\cb{(x,f(x))\in\R^{n}\times \R \st \bar u\in\partial f(x)}\\
&\supseteq\bigcap_{(u,f^*(u))\in \Psi^*(F)}\cb{(x,f(x))\in\R^n\times \R \st u\in\partial f(x)}
=\Psi(\Psi^*(F)).
\end{align*}
Now, we show $F\subseteq \Psi(\Psi^* (F))$. Assume to the contrary that there is some $(x,f(x))\in F$ such that $(x,f(x))\not\in \Psi(\Psi^*(F))$. Hence there is some $(u,f^*(u))\in\Psi^*(F)$ such that $u\not\in\partial f(x)$. But this contradicts $(x,f(x))\in F$.

%(d) We will show that $\Psi(\Psi^* (F))=F$ whenever $F$ is a minimal proper exposed face of $\epi f$. First we show $F\subseteq \Psi(\Psi^* (F))$. Assume to the contrary that there is some $(x,f(x))\in F$ such that $(x,f(x))\not\in \Psi(\Psi^*(F))$. Hence there is some $(u,f^*(u))\in\Psi^*(F)$ such that $u\not\in\partial f(x)$. But this contradicts $(x,f(x))\in F$.
%
%On the other hand,
%\[
%F=\cb{(x,f(x))\in\R^{n+1} \st \bar{u}\in\partial f(x)}=\Psi(\cb{(\bar{u},f^*(\bar{u}))})\supseteq \Psi(\Psi^*(F))
%\]
%since $(\bar{u},f^*(\bar{u}))\in\Psi^*(F)$.
%
%(e) We will show that $\Psi^*(\Psi (F^*))=F^*$ whenever $F^*$ is a minimal proper exposed face of $\epi f^*$. First we show $F^*\subseteq \Psi^*(\Psi (F^*))$. Assume to the contrary that there is some $(u,f^*(u))\in F^*$ such that $(u,f^*(u))\not\in \Psi^*(\Psi (F^*))$. Hence there is some $(x,f(x))\in\Psi(F^*)$ such that $u\not\in\partial f(x)$. But this contradicts $(u,f(u))\in F^*$.
%
%On the other hand,
%\[
%F^*=\cb{(u,f^*(u))\in\R^{n+1} \st u\in\partial f(\bar x)}=\Psi^*(\cb{(\bar{x},f(\bar{x}))})\supseteq \Psi^*(\Psi(F^*))
%\]
%since $(\bar{x},f(\bar{x}))\in\Psi(F^*)$.
\end{proof}

\section{Second Order Theory}\label{sect:secondorder}

For general proper closed convex functions $f:\R^n\to\OLR$ a property like $\dim F + \dim \Psi^*(F) =n$ as in the piecewise affine case is no longer true as the following example shows.

\begin{example}
Let $f:\R^n\to\R$ be defined by $f(x)=\frac{1}{2}\scp{x,Ax}$ with a symmetric strictly positive definite matrix A. Then $f^*:\R^n\to\R$ is given by $f^*(u)=\frac{1}{2}\scp{u,A^{-1}u}$. Moreover, $\partial f(x)=\cb{Ax}$. Obviously, the faces $F$ of $\epi f$ are exactly the point sets $\cb{\of{x,f(x)}}$ with $x\in \R^n$ and $\Psi^*\cb{\of{x,f(x)}}=\cb{\of{Ax,f^*(Ax)}}$. Hence, $\dim F =\dim \Psi^*(F) =0$ for all faces of $\epi f$. 
\end{example}

In case of smooth functions $f$ and $f^*$ all exposed faces of $\epi f$ and $\epi f^*$ consist of just one point and there exists a duality between the curvatures of $f$ and $f^*$ expressed by the fact that the Hessians of $f$ and $f^*$ are inverse at corresponding points, i.e., if $F=\cb{\of{x,f(x)}}$ then $\Psi^*(F)=\cb{\of{u,f^*(u)}}$ with $u=\nabla f(x)$ and $D^2f^*(u)=\sqb{D^2 f(x)}^{-1}$.

The latter fact was proven by Crouzeix \cite{Crouzeix} and extended by Seeger \cite{Seeger92a} to the case where $f$ and $f^*$ are not necessarily smooth by using a second-order subdifferential.

\subsection{Second-order Subdifferential}

For the definition of the second-order subdifferential we follow mainly \cite[Ch. 13]{RoWe98}. 

For $x,u \in \R^n$ with $f(x)\in\R$ and $t>0$ we define the second-order difference quotient in direction $\xi\in \R^n$ by  
\[
\Delta_t^2f(x|u)(\xi):=\frac{2}{t}\sqb{\frac{f(x+t\xi)-f(x)}{t}-\scp{u,\xi}}
\]
and the corresponding second subderivative by
\[
d^2f(x|u)(\xi):=\liminf_{\substack{t\searrow 0 \\ \xi' \to \xi}}\Delta_t^2f(x|u)(\xi').
\]
Note that $d^2f(x|u)$ is equal to the epigraphical lower limit, i.e., it holds
\[
\epi d^2f(x|u)=\Liminf_{t\searrow 0} \epi\Delta_t^2f(x|u)
\]
where the $\Liminf$ is unerstood in the sense of Painlev\'{e}-Kuratowski.
\begin{definition}
Let $f:\R^n\to\OLR$ and $x,u\in\R^n$ be given with $f(x)\in \R$. $f$ is called {\em twice epi-differentiable} at $x$ relative to $u$ if the functions $\Delta_t^2f(x|u)$ epi-converge to $d^2f(x|u)$ with $t\searrow 0$, i.e., $\epi \Delta_t^2f(x|u)$ converges to $\epi d^2f(x|u)$ in the sense of Painlev\'{e}-Kuratowski.
\end{definition}
The class of twice epi-differentiable functions is rather broad. The following theorem states sufficent conditions for twice epi-differentiability.
\begin{theorem}[\cite{Rockafellar90}, Theorem 3.4.]
Suppose that $f:\R^n\to\OLR$ has the form $f(x)=g(G(x))$ with $G(x)=(G_1(x),...,G_d(x))$, where $g:\R^d\to\OLR$ is convex and piecewise linear-quadratic and the notation is chosen so that the component functions $G_k:\R^n\to \R$ are convex of class $\C^2$ for $k=1,...,p$, but affine for $k=p+1,...,d$. Assume that $g(u)=g(u_1,...,u_d)$ is non-decreasing with respect to the variables $u_1,...,u_p$, and that there exist $\bar x\in\R^n$ and $\bar u\in\dom g$ such that $G_k(\bar x) < \bar u_k$ for $k=1,...,p$ and $g_k(\bar x)=\bar u_k$ for $k=p+1,...,d$. Then $f$ is twice epi-differentiable.
\end{theorem}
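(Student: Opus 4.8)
The plan is to derive the result from a chain rule for second-order epi-derivatives, exploiting that both constituents of $f=g\circ G$ are well behaved. The inner map $G$ is of class $\C^{2}$ in its first $p$ coordinates and affine in the remaining ones, so it admits an exact second-order Taylor expansion; the outer function $g$, being convex and piecewise linear-quadratic, is twice epi-differentiable at every point of $\dom g$ relative to every subgradient, with an explicit (again piecewise linear-quadratic) second subderivative $d^{2}g$, and its difference quotients $\Delta^{2}_{t}g$ actually stabilise on the relevant critical cone for small $t$ (see \cite[Ch.~13]{RoWe98}). The hypotheses enter as follows: monotonicity of $g$ in its first $p$ arguments keeps the composition with the convex $\C^{2}$ components of $G$ admissible (subgradients of $g$ have nonnegative first $p$ entries) and, at second order, lets one absorb the Taylor remainder in those coordinates into an upward perturbation of the argument of $g$; the Slater-type point $\bar x,\bar u$ is a constraint qualification guaranteeing $\dom f=\cb{x\st G(x)\in\dom g}$ and, for every $x$ with $f(x)\in\R$ and every $u\in\partial f(x)$, the nonemptiness and compactness of the multiplier set
\[
Y(x,u):=\cb{y\in\partial g(G(x)) \st DG(x)^{*}y=u}.
\]

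First I would dispose of the trivial case $u\notin\partial f(x)$, where $d^{2}f(x|u)\equiv+\infty$ and the $\Delta^{2}_{t}f(x|u)$ epi-converge to it automatically. So fix $x$ with $f(x)\in\R$, fix $u\in\partial f(x)$, and write $v:=G(x)$, $L:=DG(x)$ and $\Gamma(\xi):=\of{\scp{\xi,D^{2}G_{1}(x)\xi},\dots,\scp{\xi,D^{2}G_{p}(x)\xi},0,\dots,0}\in\R^{d}$. Step one: establish the first-order chain rule $\partial f(x)=L^{*}\partial g(v)$ and, from the Slater condition (the only $y$ lying both in the recession cone of $\partial g(v)$ and in $\ker L^{*}$ is $y=0$), the nonemptiness and compactness of $Y(x,u)$. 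Step two: substitute $G(x+t\xi')=v+tL\xi'+\tfrac{t^{2}}{2}\Gamma(\xi')+o(t^{2})$ into $\Delta^{2}_{t}f(x|u)(\xi')$; using the polyhedral-quadratic form of $g$ near $v$ and the monotonicity in the first $p$ coordinates, this rewrites, for each $y\in Y(x,u)$ and suitable $w_{t}\to L\xi$, as
\[
\Delta^{2}_{t}f(x|u)(\xi')=\Delta^{2}_{t}g(v|y)(w_{t})+\scp{y,\Gamma(\xi')}+o(1).
\]
Step three (lower epi-limit): for arbitrary $t_{\nu}\searrow 0$ and $\xi_{\nu}\to\xi$, lower semicontinuity of $d^{2}g(v|\cdot)$ together with compactness of $Y(x,u)$ yields $\liminf_{\nu}\Delta^{2}_{t_{\nu}}f(x|u)(\xi_{\nu})\ge\min_{y\in Y(x,u)}\of{d^{2}g(v|y)(L\xi)+\scp{y,\Gamma(\xi)}}$. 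Step four (recovery sequence): pick $y^{*}\in Y(x,u)$ attaining that minimum, take a recovery sequence $w_{\nu}\to L\xi$ for the known epi-convergence $\Delta^{2}_{t_{\nu}}g(v|y^{*})\to d^{2}g(v|y^{*})$, and pull it back through $L$ to obtain $\xi_{\nu}\to\xi$ with $\limsup_{\nu}\Delta^{2}_{t_{\nu}}f(x|u)(\xi_{\nu})\le d^{2}g(v|y^{*})(L\xi)+\scp{y^{*},\Gamma(\xi)}$. Steps three and four together give the Painlev\'{e}-Kuratowski convergence $\epi\Delta^{2}_{t}f(x|u)\to\epi d^{2}f(x|u)$, i.e.\ twice epi-differentiability, with
\[
d^{2}f(x|u)(\xi)=\min_{y\in Y(x,u)}\of{d^{2}g(G(x)\mid y)(DG(x)\xi)+\sum_{k=1}^{p}y_{k}\scp{\xi,D^{2}G_{k}(x)\xi}}.
\]

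The hard part, I expect, is the limit passage through the minimization over $Y(x,u)$: the ``active'' multiplier may depend on both the direction $\xi$ and the scale $t$, so one cannot fix $y$ once and for all, and one must combine compactness of $Y(x,u)$ with the semicontinuity of $(y,w)\mapsto d^{2}g(v|y)(w)$ — which is usable here in an essentially finite, explicit form only because $g$ is piecewise linear-quadratic. A second delicate point is controlling $\scp{y,o(t^{2})}/t^{2}$, the Taylor remainder of $G$ paired against the multiplier: this is precisely where the Slater condition is indispensable, for unbounded recession directions of $\partial g(v)$ would keep this term from vanishing and destroy the epi-limit. Once these two points are handled, the first-order chain rule, the rewriting of the difference quotient, and the pull-back construction of the recovery sequence are routine, and the monotonicity hypothesis serves only to prevent the inequality-type components ($k\le p$) from interfering when the remainder is reabsorbed into the argument of $g$.
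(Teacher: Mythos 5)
The paper offers no proof of this statement -- it is quoted, with attribution, from Rockafellar's 1990 paper -- so your sketch can only be measured against the argument in that source. Your overall architecture is the right one: the first-order chain rule, the multiplier set $Y(x,u)=\cb{y\in\partial g(G(x)) \st DG(x)^{*}y=u}$ made nonempty and compact by the Slater-type condition, the exact rewriting $\Delta^{2}_{t}f(x|u)(\xi')=\Delta^{2}_{t}g(G(x)|y)(w_{t})+\scp{y,\Gamma(\xi')}+o(1)$ with $w_{t}=(G(x+t\xi')-G(x))/t$, and a liminf/limsup analysis. The identity in your Step two is indeed valid for every $y$ with $DG(x)^{*}y=u$.

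But Steps three and four contain a genuine error: the second subderivative of the composite is the \emph{maximum}, not the minimum, over $Y(x,u)$ of $d^{2}g(G(x)|y)(DG(x)\xi)+\scp{y,\Gamma(\xi)}$. Since your Step-two identity holds for \emph{every} $y\in Y(x,u)$ simultaneously, taking the liminf and using $\liminf_{t}\Delta^{2}_{t}g(v|y)(w_{t})\ge d^{2}g(v|y)(L\xi)$ already yields the lower bound by the max; your ``$\ge\min$'' is true but is the wrong target. Correspondingly, Step four cannot work: (i) a minimizing $y^{*}$ would give an upper bound by the min, contradicting the lower bound by the max whenever the two differ; and (ii) you cannot in general ``pull back through $L$'' a recovery sequence for $\Delta^{2}_{t}g(v|y^{*})$, because $DG(x)$ need not be surjective and the only attainable arguments of $g$'s difference quotient are the points $w_{t}$ on the curve $(G(x+t\xi')-G(x))/t$. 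Concretely: $n=1$, $d=p=2$, $g(u)=\max(u_{1},u_{2})$, $G(x)=(x^{2},2x^{2})$, $x=0$, $u=0$ satisfies all hypotheses; then $f(x)=2x^{2}$, so $d^{2}f(0|0)(w)=4w^{2}$, while $Y(0,0)$ is the whole unit simplex and your min-formula gives $2w^{2}$. The genuinely hard step, which your sketch does not supply, is the upper estimate $\limsup_{t\searrow 0}\Delta^{2}_{t}f(x|u)(\xi)\le\max_{y\in Y(x,u)}\of{\cdots}$ along the \emph{constant} sequence $\xi_{t}=\xi$; it is obtained from the explicit piecewise linear-quadratic representation of $g$ near $G(x)$ together with an LP-duality argument converting the resulting primal limit into a maximum over multipliers. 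With the min replaced by the max and Step four replaced by that duality computation, the remainder of your outline (the roles of monotonicity, of the Slater point, and of the Taylor remainder) is sound.
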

If $f$ is proper convex and twice epi-differentiable, $x\in \dom f$ and $u\in\partial f (x)$ then, according to \cite[Prop. 13.20]{RoWe98}, there exists a uniquely defined closed convex set $C\subseteq \R^n$ such that $d^2f(x|u)=\gamma_C^2$ where $\gamma_C$ denotes the gauge function of $C$.
Based on Hiriart-Urruty and Seeger \cite{HUSe89a} we will call this set $C$ the indicatrix of $f$ at $x$ relative to $u$ and denote this set by $\ind f(x|u)$. 
From the theory of gauge functions it follows that 
\[
\ind f(x|u):=\cb{\xi\in\R^n \st d^2f(x|u)(\xi)\le 1}.
\] 
\begin{remark}
In fact Hiriart-Urruty and Seeger \cite{HUSe89a} define upper and lower indicatrices as
\begin{align*}
\overline{\ind}_f(x,u)&=\Limsup_{t\searrow 0}\cb{\xi\in\R^n \st \Delta_t^2f(x|u)(\xi) \le 1}\\
\underline{\ind}_f(x,u)&=\Liminf_{t\searrow 0}\cb{\xi\in\R^n \st \Delta_t^2f(x|u)(\xi) \le 1}.
\end{align*}
They are both subsets of $\ind f(x|u)$ but do not coincide in general. Seeger \cite{Seeger92a} defines $f$ to be {\em second-order regular} at $x$ relative to $u$ if $d^2f(x|u)=\cl \overline{f''}(x,u)$ where $\overline{f''}(x,u)(w)=\limsup_{t\searrow 0}\Delta_t^2f(x|u)(w)$. If $f$ is second-order regular at $x$ relative to $u$ then $d^2f(x|u)(w)=\liminf_{t\searrow 0}\Delta_t^2f(x|u)(w)$ for all $w\in\R^n$ and
\[
\ind f(x|u)=\overline{\ind}_f(x,u)=\underline{\ind}_f(x,u).
\]
In particular, this is the case if $f$ is piecewise linear-quadratic (see \cite[Theorem 3.1]{Rockafellar88}).
\end{remark}

We will now give a geometric interpretation of the indicatrix of a second-order regular function based on the considerations in \cite{Busemann58}, sections 2 and 3.

Given $x_0\in \dom f$ and $\zeta \in\R^n$ with $\norm{\zeta}=1$ we consider the plane $P$ in $\R^n\times \R$ going through the point $(x_0,0)$ spanned by the direction vectors $(\zeta,0)$ and $(0,1)$. The intersection of $P$ with the graph of $f$ is given by the set 
\[
\cb{(x_0+t\zeta,f(x_0+t\zeta)) \st t\in \R,\; x_0+t\zeta \in \dom f}.
\]
Given $m\in\R$, if $x_0+t\zeta\in\dom f$ then let $\rho_t(x_0,\zeta,m)$ be the radius of the circle lying in $P$, going through the points $(x_0,f(x_0))$ and $(x_0+t\zeta,f(x_0+t\zeta))$ and having slope $m$ at $(x_0,f(x_0))$. If $x_0+t\zeta\not\in\dom f$ we define $\rho_t(x_0,\zeta,m)=0$. Then
\[
\rho_t(x_0,\zeta,m)=\sqrt{1+m^2}\of{1+\frac{(f(x_0+t\zeta)-f(x_0))^2}{t^2}}\frac{t}{2}\of{\frac{f(x_0+t\zeta)-f(x_0)}{t}-m}^{-1}.
\]
Let $u\in\partial f(x_0)$ be given then $H_{u}(x_0):=\cb{(x,y)\in \R^n\times \R \st \scp{u,x}-y=\scp{u,x_0}-f(x_0)}$ is a hyperplane supporting $\epi f$ at $(x_0,f(x_0))$.

We define the upper radius of curvature of $f$ at $x_0$ in direction $\zeta$ relative to $u$
as $\bar r(x_0,u,\zeta)=\limsup_{t\searrow 0}\rho_t(x_0,\zeta,\scp{u,\zeta})$ where $\scp{u,w}$ characterizes the slope of the intersection of $H_u(x_0)$ with $P$. We get
\begin{align*}
%\limsup_{t\searrow 0}\Delta^2_tf(x_0|u)(w)&=\frac{\sqrt{1+\scp{u,w}^2}\of{1+f'(x_0;w)^2}}{r_i(u,w)},\\
\liminf_{t\searrow 0}\Delta^2_tf(x_0|u)(\zeta)&=\frac{\sqrt{1+\scp{u,\zeta}^2}\of{1+f'(x_0;\zeta)^2}}{\bar r(x_0,u,\zeta)}.
\end{align*}
%where
%\[
%f'(x_0;\zeta)=\lim_{t\searrow 0}\frac{f(x_0+t\zeta)-f(x_0)}{t}
%\]
%is the directional derivative of $f$ at $x_0$ in direction $\zeta$.
If we take into account that $\bar r(x_0,u,\zeta)=0$ if $\scp{u,\zeta}<f'(x_0;\zeta)$,
$\scp{u,\zeta}>f'(x_0;\zeta)$ is impossible due to $u\in\partial f(x_0)$ and that $d^2f(x_0|u)$ 
is positively homogeneous of degree 2 (\cite[Proposition 13.5]{RoWe98}) we can conclude
\[
  \ind f(x_0|u)=\cb{t\zeta\in\R^n \st \norm{\zeta}=1,\; 0\le t \le
  \frac{\sqrt{\bar r(x_0,u,\zeta)}}{\of{1+\scp{u,\zeta}^2}^{\frac{3}{4}}}}.  
\]

Often the polar of the indicatrix is referred to as the second-order subdifferential (see e.g. \cite{HUSe89a,Seeger92a,Seeger94}), i.e., 
\[
\partial^2f(x|u):=(\ind f(x|u))^\circ=\cb{\eta\in\R^n \st \scp{\eta,\xi}\le\sqrt{d^2f(x|u)(\xi)}\text{ for all }\xi\in\R^n}.
\]
Note that the exact definition of the second-order subdifferential varies in the above mentioned papers subject to different convergence concepts that are used in the definition of the second subderivative.

According to \cite[Lemma 4.6]{Seeger92a} (see also \cite[Theorem 13.21]{RoWe98} and the subsequent discussion) the following statement holds.
\begin{theorem}\label{LSO}
Let $f:\R^n \to \OLR$ be a proper closed convex function that is twice epi-differentiable at $x\in\dom f$ relative to $u\in\partial f(x)$. Then $f^*$ is twice epi-differentiable at $u$ relative to $x$ and it holds
\[
\partial^2f^*(u|x)=(\partial^2f(x|u))^\circ=\ind f(x|u)=(\ind f^*(u|x))^\circ.
\]
\end{theorem}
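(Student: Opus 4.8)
The plan is to reduce the four identities to routine polarity bookkeeping once one imports a single genuinely analytic fact. First I would record what Rockafellar--Wets \cite[Thm.\ 13.21]{RoWe98} and Seeger \cite[Lemma 4.6]{Seeger92a} supply under the hypothesis that $f$ is twice epi-differentiable at $x$ relative to $u\in\partial f(x)$: (a) $f^*$ is twice epi-differentiable at $u$ relative to $x$, and (b) the two second subderivatives are conjugate up to the natural normalization,
\[
\of{\tfrac12 d^2f(x|u)}^*=\tfrac12 d^2f^*(u|x),
\]
the factor $\tfrac12$ being the one for which $f=\tfrac12\scp{\cdot,A\cdot}$ yields $d^2f(x|u)=\scp{\cdot,A\cdot}$ and $d^2f^*(u|x)=\scp{\cdot,A^{-1}\cdot}$, in agreement with the earlier example.

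Next I would set $C:=\ind f(x|u)$, so that $d^2f(x|u)=\gamma_C^2$ by \cite[Prop.\ 13.20]{RoWe98}, $\gamma_C$ denoting the gauge of $C$, and check that $C$ is closed, convex and contains the origin: closedness and convexity hold by construction, and $0\in C$ because $u\in\partial f(x)$ makes every difference quotient $\Delta_t^2f(x|u)$ nonnegative, hence $d^2f(x|u)\ge 0$, while positive homogeneity of degree $2$ together with lower semicontinuity forces $d^2f(x|u)(0)=0\le 1$. Then I would invoke the elementary conjugacy of squared gauges, $\of{\tfrac12\gamma_C^2}^*=\tfrac12\gamma_{C^\circ}^2$, valid precisely because $C$ is closed convex with $0\in C$ (it rests on $\gamma_C=\sigma_{C^\circ}$). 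Combining this with (b) gives $d^2f^*(u|x)=\gamma_{C^\circ}^2$. Since by (a) we may apply \cite[Prop.\ 13.20]{RoWe98} to $f^*$ at $u$ relative to $x$, we also have $d^2f^*(u|x)=\gamma_{D}^2$ with $D:=\ind f^*(u|x)$ the unique closed convex representing set; uniqueness then yields the key identity $\ind f^*(u|x)=C^\circ=\of{\ind f(x|u)}^\circ$.

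From here everything is bookkeeping. By definition $\partial^2f(x|u)=C^\circ$ and $\partial^2f^*(u|x)=\of{\ind f^*(u|x)}^\circ=\of{C^\circ}^\circ$; since $C$ is closed convex and $0\in C$ we have $\of{C^\circ}^\circ=C$, so each of $\partial^2f^*(u|x)$, $\of{\partial^2f(x|u)}^\circ$ and $\of{\ind f^*(u|x)}^\circ$ equals $C=\ind f(x|u)$, which is exactly the asserted chain.

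The main obstacle sits entirely in the imported facts (a)--(b): that twice epi-differentiability passes to the conjugate and that the second subderivatives are mutually conjugate. This is the nonsmooth analogue of Crouzeix's theorem \cite{Crouzeix} that the Hessians of $f$ and $f^*$ are inverse at corresponding points, and its proof relies on the duality of epi-convergence under Legendre--Fenchel conjugation (a Wijsman-type theorem) applied to the families $\Delta_t^2f(x|u)$ and $\Delta_t^2f^*(u|x)$, as carried out in \cite{RoWe98} and, in the second-order-regular setting, in \cite{Seeger92a}; I would not reprove it. The only points in the present argument needing care are the hypotheses of the squared-gauge conjugacy and of $\of{C^\circ}^\circ=C$, namely that $\ind f(x|u)$ is closed, convex and contains the origin, which is routine.
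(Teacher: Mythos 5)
Your proof is correct and is in substance the same as the paper's treatment: the paper states Theorem \ref{LSO} without proof, deferring entirely to \cite[Lemma 4.6]{Seeger92a} and \cite[Theorem 13.21]{RoWe98}, and your reconstruction — importing the conjugacy $\of{\tfrac12 d^2f(x|u)}^*=\tfrac12 d^2f^*(u|x)$ and the transfer of twice epi-differentiability to $f^*$, then reducing the four identities to the squared-gauge polarity $\of{\tfrac12\gamma_C^2}^*=\tfrac12\gamma_{C^\circ}^2$, the uniqueness of the representing set in \cite[Prop.\ 13.20]{RoWe98}, and the bipolar theorem for the closed convex set $C=\ind f(x|u)\ni 0$ — is exactly the derivation those references carry out. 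No gaps; the only nitpick is that $d^2f(x|u)(0)=0$ follows most directly from $\Delta_t^2f(x|u)(0)=0$ together with nonnegativity of the difference quotients, rather than from homogeneity, but this does not affect anything.
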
 
The next example shows that the preceding lemma is indeed a generalisation of Crouzeix's result.
\begin{example}
If $f$ is twice continuously differentiable at $x$ then $\partial f(x)=\cb{\nabla f(x)}$ and 
\[d^2f(x|\nabla f(x))(\xi)=\scp{D^2f(x)\xi,\xi},\]
where $D^2f(x)$ denotes the Hessian matrix of $f$ at $x$.
Hence 
\[
\ind f(x|\nabla f(x))=\cb{w\in\R^n \st \scp{D^2f(x)\xi,\xi}\le 1}
\]
and
\[
\partial^2 f(x|\nabla f(x))=\cb{\eta\in\R^n \st \scp{\eta,\xi}\le\sqrt{\scp{D^2f(x)\xi,\xi}}\text{ for all }\xi\in\R^n}.
\]
If the Hessian is nonsingular then the subdifferential is a nondegenerate ellipsoid and admits the characterization 
\[
\partial^2 f(x|\nabla f(x))=\cb{\eta\in\R^n \st \scp{(D^2f(x))^{-1}\eta,\eta}\le 1}.
\]
On the other hand, if $f^*$ is twice continuously differentiable as well then
\begin{align*}
\cb{\eta\in\R^n \st \scp{D^2f^*(\nabla x)\eta,\eta}\le 1}&=\ind f^*(\nabla f(x)|x)\\
&=\partial^2 f(x|\nabla f(x))=\cb{\eta\in\R^n \st \scp{(D^2f(x))^{-1}\eta,\eta}\le 1}.
\end{align*}
Hence, it follows from Theorem \ref{LSO} that $D^2f^*(\nabla f(x))=(D^2f(x))^{-1}$.
\end{example}

\subsection{Polyhedral Convex Functions}\label{subsect:PCF}

We consider the case where $f$ is a polyhedral convex function, i.e., $\epi f$ is a polyhedral convex set. $f$ is polyhedral convex if it can be expressed in the form 
\[
f(x)=\max_{i=1,...,m}\sqb{\scp{a_i,x}-b_i}+\delta_D(x)
\]
with $D=\cb{x\in\R^n \st \scp{a_{m+1},x}\le b_{m+1},...,\scp{a_l,x}\le b_l}$.
We assume that none of the affine functions and none of the inequalities can be omitted in the above representation. It is well known that $u\in\partial f(x)$ iff $u\in \conv\cb{a_i \st i\in I(x)}+\cone\cb{a_j \st j\in J(x)}$ where
\[
I(x):=\cb{i\in\cb{1,..,m} \st f(x)=\scp{a_i,x}-b_i},\quad J(x):=\cb{i\in \cb{m+1,...,l} \st \scp{a_i,x}=b_i}.
\]  
According to \cite[Theorem 3.1]{Rockafellar88} we have
$d^2(x|u)=\delta_{K(x,u)}$ with
\begin{align}
K(x,u)&=\cb{w\in\R^n \st \scp{u,w}=f'(x;w)}\notag\\
&=\cb{w\in T_D(x) \st \scp{u,w}=\max_{i\in I(x)}\scp{a_i,w}}\notag\\
&=\cb{w\in T_D(x) \st u\in\conv\cb{a_i\st i\in I'(x,w)}+\cone\cb{a_j:j\in J'(x,w)}}\label{K}
\end{align}
where
\[
T_D(x)=\cb{w\in \R^n \st  \forall j\in J(x): \scp{a_j,w}\le 0}
\]
is the tangent cone to $D$ at $x$ and
\begin{align*}
I'(x,w)&=\cb{i\in I(x) \st \scp{a_i,w}=\max_{j\in I(x)}\scp{a_j,w}}, \quad J'(x,w)&=\cb{j\in J(x) \st \scp{a_j,w}=0}.
\end{align*}
Thus $\ind f(x|u)=\cb{w\in\R^n \st \delta_{K(x,u)}(w)\le 1}=K(x,u)$.

Subsequently we will show that $\ind f(\bar x|\bar u)$ is a linear subspace with $\dim \ind f(\bar x|\bar u) =\dim \Psi(F^*)$ if $\bar x$ and $\bar u$ are chosen such that $(\bar u,f^*(\bar u))\in \ri F^*$ and $(\bar x,f(\bar x))\in \ri \Psi(F^*)$. We start with an auxiliary lemma.
%We have $u\in\partial f(x)$ if and only if $u\in\conv\cb{a_i\st i\in I(x)}$ and $d^2f(x|u)(w)=\delta_{K(x,u)}(w)$ with
%\[
%K(x,u)=\cb{w\in\R^n \st u\in\conv\cb{a_i \st i\in I'(x,w)}}.
%\]
\begin{lemma}\label{LemI'} 
For all $x\in D$, $w \in T_D(x)$ there exists some $\bar t > 0$ such that for all $t\in (0,\bar{t})$ $I'(x,w)=I(x+tw)$ and $J'(x,w)=J(x+tw)$.
%\[
%I'(x,w)=\cb{i \st \exists\bar{t}>0 : i\in I(x+tw)\text{ for all } t\in [0,\bar{t}]}.
%\]
\end{lemma}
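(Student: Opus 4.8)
The plan is to exploit the fact that $I(x)$, $J(x)$, $I'(x,w)$, $J'(x,w)$ are all defined by finitely many equalities/inequalities, so that "staying on the active set along the ray $x+tw$ for small $t$" is just a matter of separating the finitely many constraints that are tight at $x$ from those that are slack, and among the tight ones, separating those whose linear part vanishes on $w$ from those whose linear part is negative (for the $J$-constraints) or strictly smaller than the max (for the $I$-constraints). Concretely, fix $x\in D$ and $w\in T_D(x)$ and consider the affine functions $\varphi_i(t):=\scp{a_i,x+tw}-b_i$ for $i\in\cb{1,\dots,m}$ and $\psi_j(t):=\scp{a_j,x+tw}-b_j$ for $j\in\cb{m+1,\dots,l}$. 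Each is affine in $t$, so its sign for small $t>0$ is governed first by its value at $t=0$ and, if that value is $0$, by the sign of its derivative $\scp{a_i,w}$ (resp. $\scp{a_j,w}$).

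First I would handle $J$. For $j\notin J(x)$ we have $\psi_j(0)<0$, hence $\psi_j(t)<0$ for all sufficiently small $t>0$; so such $j$ never enters $J(x+tw)$. For $j\in J(x)$ we have $\psi_j(0)=0$ and $\scp{a_j,w}\le 0$ since $w\in T_D(x)$. If $\scp{a_j,w}=0$ then $\psi_j(t)\equiv 0$, so $j\in J(x+tw)$ for all $t$ and also $j\in J'(x,w)$; if $\scp{a_j,w}<0$ then $\psi_j(t)<0$ for $t>0$, so $j\notin J(x+tw)$ for $t>0$ and $j\notin J'(x,w)$. This already shows $J(x+tw)=J'(x,w)$ for all small $t>0$, and in particular $x+tw\in D$. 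Next I would handle $I$. Write $g(t):=\max_{i\in\cb{1,\dots,m}}\varphi_i(t)=f(x+tw)-$(the $\delta_D$ term), which is finite and equals $f(x)-b\text{-stuff}$... more precisely, on the ray $x+tw$ with $t$ small, $f(x+tw)=g(t)$. Since $i\notin I(x)$ means $\varphi_i(0)<g(0)$, continuity of $g$ and of $\varphi_i$ gives $\varphi_i(t)<g(t)$ for small $t>0$, so such $i$ stays out of $I(x+tw)$. For $i\in I(x)$ we have $\varphi_i(0)=g(0)$; among these, $g(t)=g(0)+t\cdot\max_{i\in I(x)}\scp{a_i,w}$ for small $t\ge 0$ (the max over the active set dominates once $t$ is small), so $i\in I(x+tw)$ for small $t>0$ iff $\varphi_i'(0)=\scp{a_i,w}=\max_{j\in I(x)}\scp{a_j,w}$, i.e. iff $i\in I'(x,w)$. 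Finally, taking $\bar t$ to be the minimum of the finitely many thresholds produced above yields a single $\bar t>0$ that works for all the constraints simultaneously.

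The only mildly delicate point is the claim that $f(x+tw)=\max_{i\in\cb{1,\dots,m}}\sqb{\scp{a_i,x+tw}-b_i}$ for small $t>0$, together with the formula $\max_{i}\varphi_i(t)=\varphi_i(0)$-value $+\,t\max_{i\in I(x)}\scp{a_i,w}$ for small $t\ge 0$; both follow from the fact that an inactive affine function ($\varphi_i(0)<g(0)$) cannot overtake the max for $t$ near $0$, while the $\delta_D$ term vanishes because we already showed $x+tw\in D$ for small $t>0$. I expect the main obstacle to be purely bookkeeping: making sure all the "sufficiently small $t$" thresholds are collected into one $\bar t$, and checking that the identity $f'(x;w)=\max_{i\in I(x)}\scp{a_i,w}$ (used implicitly to identify $I'(x,w)$ with the eventual active set) is consistent with the directional-derivative description of $K(x,u)$ in \eqref{K}. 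No deep convex-analytic input is needed beyond finiteness of the defining data and the elementary behaviour of affine functions near $t=0$.
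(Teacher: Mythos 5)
Your proposal is correct and follows essentially the same route as the paper: separate the slack constraints (which stay slack for small $t$) from the tight ones, and among the tight ones use the sign of $\scp{a_i,w}$ (resp.\ whether $\scp{a_i,w}$ attains $\max_{j\in I(x)}\scp{a_j,w}$) to identify the eventual active sets, finally taking the minimum of the finitely many thresholds. The paper merely makes the threshold $\bar t$ explicit via the ratio of the slack $[\scp{a_i,x}-b_i]-[\scp{a_k,x}-b_k]$ to the slope difference $\scp{a_k,w}-\scp{a_i,w}$, which is exactly the quantitative form of your ``an inactive affine function cannot overtake the max for $t$ near $0$'' argument.
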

\begin{proof}
Choose $\bar t >0$ such that 
\[
\bar t \le \frac{\sqb{\scp{a_i,x}-b_i}-\sqb{\scp{a_k,x}-b_k}}{\scp{a_k,w}-\scp{a_i,w}}
\]
if [($i\in I(x)$ and $k\in\cb{1,...,m}\setminus I(x)$) or ($i\in J(x)$ and $k\in\cb{m+1,...,l}\setminus J(x)$)] and $\scp{a_k,w}>\scp{a_i,w}$. Such $\bar t$ exists since $\scp{a_i,x}-b_i>\scp{a_k,x}-b_k$ if ($i\in I(x)$ and $k\in\cb{1,...,m}\setminus I(x)$) or ($i\in J(x)$ and $k\in\cb{m+1,...,l}\setminus J(x)$). 

Let $t\in (0,\bar{t})$ be arbitrarily chosen. 

By the choice of $\bar t$, we have 
\begin{equation}\label{smallerJ}
\forall i\in J(x),\; \forall k\in \cb{m+1,...,l}\setminus J(x) :\quad \scp{a_k,x+tw}-b_k<\scp{a_i,x+tw}-b_i\le 0
\end{equation}
since $w\in T_D(x)$. Hence, $x+tw\in D$ and
\[
i\in J(x+tw)\Leftrightarrow \scp{a_i,x+tw}=b_i \Leftrightarrow \of{i\in J(x) \text{ and }\scp{a_i,w}=0}
\Leftrightarrow i\in J'(x,w).
\]
Analogously, $\scp{a_i,x+tw}-b_i >\scp{a_k,x+tw}-b_k$ for all $i\in I(x)$ and all $k\in \cb{1,...,m}\setminus I(x)$.
%Hence $I(x+tw)\subseteq I(x)$.
Consequently, we have 
\begin{align*}
i\in I(x+tw)&\Leftrightarrow \scp{a_i,x+tw}-b_i=f(x+tw)=\max_{j=1,...,m}\sqb{\scp{a_j,x+tw}-b_j}\\
&\Leftrightarrow \of{i\in I(x) \text{ and } \scp{a_i,w}=\max_{j\in I(x)}\scp{a_j,w}} \Leftrightarrow i\in I'(x,w) .
\end{align*}
\end{proof}
%\begin{proof}
%(i) Let $i\in I'(x,w)$ and choose $\bar t >0$ such that 
%\[
%\bar t < \frac{\sqb{\scp{a_i,x}+b_i}-\sqb{\scp{a_k,x}+b_k}}{\scp{a_k,w}-\scp{a_i,w}}
%\]
%for all $k\not\in I(x)$ with $\scp{a_k,w}-\scp{a_i,w}$. Such $\bar t$ exists since $\scp{a_i,x}+b_i>\scp{a_k,x}+b_k$ if $k\not\in I(x)$. For $t\in [0,\bar{t}]$ we have
%\begin{align*}
%\max_{j\in I(x)}\sqb{\scp{a_j,x+tw}+b_j}=\scp{a_i,x+tw}+b_i\ge\max_{k\not\in I(x)}\sqb{\scp{a_k,x+tw}+b_k}
%\end{align*}
%by the choice of $\bar t$. Hence
%\begin{align*}
%f(x+tw)&=\max_{j=1,...,m}\sqb{\scp{a_j,x+tw}+b_j}=\scp{a_i,x+tw}+b_i,
%\end{align*}
%i.e., $i\in I(x+tw)$.
%
%(ii) Let $i\in \cb{1,...,m}$ such that there is some $\bar t >0$ with $i\in I(x+tw)$ for all $t\in [0,\bar{t}]$. Then obviously $i\in I(x)$. For all $j\in I(x)$ we have $\scp{a_j,x}+b_j=\scp{a_i,x}+b_i$
%and $\scp{a_j,x+\bar tw}+b_j\le\scp{a_i,x+\bar tw}+b_i$ implying $\scp{a_j,w}\le \scp{a_i,w}$. Hence $i\in I'(x,w)$.
%\end{proof}
\begin{corollary}\label{CorK}
\[
\ind f(x|u)=\cb{w\in\R^n \st \exists\bar{t}>0\;\forall  t\in (0,\bar{t}) : u\in\partial f(x+tw)}.
\]
\end{corollary}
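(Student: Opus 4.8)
The plan is to prove the two inclusions separately, the only real ingredients being the identity $\ind f(x|u)=K(x,u)$ recorded just after \eqref{K}, the subdifferential formula $u\in\partial f(y)\Leftrightarrow u\in\conv\cb{a_i\st i\in I(y)}+\cone\cb{a_j\st j\in J(y)}$ (valid for $y\in D$, while $\partial f(y)=\emptyset$ for $y\notin D=\dom f$), and Lemma \ref{LemI'}. Throughout I keep the standing assumption $x\in\dom f=D$.

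For ``$\subseteq$'' I would take $w\in\ind f(x|u)=K(x,u)$. By the description \eqref{K} this means $w\in T_D(x)$ and $u\in\conv\cb{a_i\st i\in I'(x,w)}+\cone\cb{a_j\st j\in J'(x,w)}$. Lemma \ref{LemI'} supplies $\bar t>0$ with $I'(x,w)=I(x+tw)$ and $J'(x,w)=J(x+tw)$ for every $t\in(0,\bar t)$, and the proof of that lemma also shows $x+tw\in D$ for such $t$. Substituting these index-set identities into the subdifferential formula yields $u\in\partial f(x+tw)$ for all $t\in(0,\bar t)$, so $w$ lies in the right-hand set.

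For ``$\supseteq$'' I would assume there is $\bar t>0$ with $u\in\partial f(x+tw)$ for all $t\in(0,\bar t)$. Since $\partial f(x+tw)\neq\emptyset$ forces $x+tw\in D$, and $x\in D$, for every $j\in J(x)$ we have $\scp{a_j,x}=b_j$ and $\scp{a_j,x+tw}\le b_j$, hence $\scp{a_j,w}\le 0$; thus $w\in T_D(x)$. Applying Lemma \ref{LemI'} gives $\bar t'>0$ with $I'(x,w)=I(x+tw)$, $J'(x,w)=J(x+tw)$ for $t\in(0,\bar t')$; choosing any $t\in\of{0,\min\cb{\bar t,\bar t'}}$, the subdifferential formula applied to $u\in\partial f(x+tw)$ gives $u\in\conv\cb{a_i\st i\in I(x+tw)}+\cone\cb{a_j\st j\in J(x+tw)}=\conv\cb{a_i\st i\in I'(x,w)}+\cone\cb{a_j\st j\in J'(x,w)}$. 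Together with $w\in T_D(x)$ this is precisely the condition $w\in K(x,u)=\ind f(x|u)$ in \eqref{K}.

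I do not expect a serious obstacle: the corollary is essentially a repackaging of Lemma \ref{LemI'} through the two characterizations. The only points requiring a little care are (a) verifying $w\in T_D(x)$ in the ``$\supseteq$'' direction rather than assuming it, and (b) coordinating the two radii (the one from the hypothesis and the one from Lemma \ref{LemI'}) by intersecting the intervals before selecting $t$.
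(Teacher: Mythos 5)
Your proof is correct and takes essentially the same route as the paper's: both inclusions are reduced to the index-set characterization of $\partial f$ combined with Lemma \ref{LemI'}, exactly as in the paper. Your explicit check that $w\in T_D(x)$ in the ``$\supseteq$'' direction and the intersection of the two radii are just slightly more careful renderings of steps the paper handles implicitly (via the contrapositive remark at the end of its proof).
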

\begin{proof}
''$\subseteq$:'' Let $w\in \ind f(x|u)$, i.e., $w\in T_D(x)$ and 
\[
u\in\conv\cb{a_i \st i\in I'(x,w)}+\cone\cb{a_j \st j\in J'(x,w)}.
\]
By Lemma \ref{LemI'} there is some $\bar t>0$ such that for all $t\in (0,\bar t)$,
\[
u\in\conv\cb{a_i \st i\in I(x+tw)}+\cone\cb{a_j \st j\in J(x+tw)}
\]
hence $u\in \partial f(x+tw)$.

''$\supseteq$:'' Assume that there is some $\bar t >0$ with $u\in \partial f(x+tw)$ for all $t\in (0,\bar{t})$. Then 
\[
u\in\conv\cb{a_i \st i\in I(x+tw)}+\cone\cb{a_j \st j\in J(x+tw)}
\] 
for all $t\in (0,\bar{t})$,  i.e., 
\[
u\in\conv\cb{a_i \st i\in I'(x,w)}+\cone\cb{a_j \st j\in J'(x,w)}
\]
by Lemma \ref{LemI'}.
Moreover, $w\in T_D(x)$ since otherwise $x+tw\not \in D$ for all $t>0$ contradicting $\partial f(x+tw)\neq \emptyset$.
\end{proof}

If $f$ is polyhedral convex, $f^*$ is polyhedral convex, too, i.e., it can be expressed as
\[
f^*(u)=\max_{i=1,...,p}\sqb{\scp{a_i^*,u}-b_i^*}+\delta_{D^*}(u)
\]
with $D^*=\cb{u\in\R^n \st \scp{a_{p+1}^*,u}\le b_{p+1}^*,...,\scp{a_q^*,x}\le b_q^*}$.
Each $K$-minimal proper (exposed) face $F^*$ of $\epi f^*$ is uniquely characterized by a pair of index sets $I^*\subseteq \cb{1,...,p}$ and $J^*\subseteq \cb{p+1,...,q}$ (where $I^*$ must be nonempty and $J^*$ may be empty) in the following way
\[
F^*=\cb{(u,f^*(u)) \st u\in D^*,\; \forall j\in J^* : \scp{a_j^*,u}=b_j^*,\; \forall i\in I^* : f^*(u)=\scp{a_i^*,u}-b_i^*}.
\]
Moreover, $(\bar u,f^*(\bar u))\in \ri F^*$ iff $I^*(\bar u)=I^*$ and $J^*(\bar u)=J^*$ where 
\[
I^*(u):=\cb{i\in\cb{1,..,p} \st f^*(u)=\scp{a_i^*,u}-b_i^*},\quad J^*(u):=\cb{i\in \cb{p+1,...,q} \st \scp{a_i^*,u}=b_i^*}.
\]
Let $(\bar u,f^*(\bar u))\in \ri F^*$ then
\begin{align*}
\Psi(F^*)&=\bigcap_{(u,f^*(u))\in F^*} \cb{(x,f(x)\in\R^n\times \R \st x\in \partial f^*(u)}\\
&=\bigcap_{(u,f^*(u))\in F^*} \cb{(x,f(x))\in\R^n\times \R \st x\in \conv\cb{a_i^*\st i\in I^*(u)}+\cone\cb{a_i^*\st i\in J^*(u)}}\\
&=\cb{(x,f(x))\in\R^n\times \R \st x\in \conv\cb{a_i^*\st i\in I^*(\bar u)}+\cone\cb{a_i^*\st i\in J^*(\bar u)}}\\
&=\cb{(x,f(x))\in\R^n\times \R \st x\in \partial f^*(\bar u)}\\
&=\cb{(x,f(x))\in\R^n\times \R \st \bar u \in \partial f(x)}
\end{align*}
From Corollary \ref{CorK} we conclude
\[\begin{split}
w\in \ind f(x|\bar{u}) &\Leftrightarrow \exists\bar{t}>0\; \forall  t\in (0,\bar{t}) : \bar u\in\partial f(x+tw)\\
&\Leftrightarrow \exists\bar{t}>0\; \forall  t\in (0,\bar{t}) : (x+tw,f(x+tw))\in \Psi(F^*).
\end{split}\]
From this representation it is easy to see that $\ind f(\bar x|\bar u)$ is a linear subspace of $\R^n$ with $\dim\ind f(\bar x|\bar u)=\dim\Psi(F^*)$ if $(\bar x,f(\bar x))\in \ri \Psi(F^*)$.

%Moreover,  
%\[
%\ind f(\bar{x},\bar{u})=\cb{w\in\R^n \st \delta_{K(\bar{x},\bar{u})}(w)\le 1}=K(\bar{x},\bar{u}),
%\]
%thus $\dim F =\dim \ind f(\bar{x},\bar{u})$.

Analogously, we can show  that $\ind f^*(\bar{u}|\bar{x})$ is a linear subspace of $\R^n$, too, with $\dim \ind f^*(\bar{u}|\bar{x})=\dim F^*$.
Since $\ind f(\bar{x}|\bar{u})$ and $\ind f^*(\bar{u}|\bar{x})$ are linear subspaces of $\R^n$ that are polar to each other by Theorem \ref{LSO}, their dimensions add up to $n$. Hence $\dim F^* + \dim \Psi(F^*)=n$ follows from Theorem \ref{LSO}.

\section{Application to Vector Optimization}\label{sect:vectopt}

In the previous sections we have shown geometric duality relations between the epigraph of a proper closed convex function and the epigraph of its conjugate. In this section we will describe a transformation of the extended image of a vector optimization problem into an epigraph of a proper closed convex function $f$ and we will determine the dual problem.
%in case of a polyhedral convex feasible set.

Let $\Gamma: \R^m \to R^q$ be a vector-valued objective function that has to be minimized over a nonempty convex feasible set $\X\subseteq\R^m$ with respect to the ordering generated by a nonempty closed convex cone $C\subseteq\R^q$ that is not a linear space.
We assume that $\Gamma$ is $C$-convex, i.e., for all $x_1,x_2\in \R^m$, $t\in[0,1]$ we have
\[
(1-t)\Gamma(x_1)+t\Gamma(x_2)-\Gamma\of{(1-t)x_1+tx_2} \in C.
\]
We want to derive geometric duality relations for the upper closed extended image $\P:=\cl\of{\Gamma[\X]+C}$ of this vector optimization problem. Obviously, $\P$ is closed and $C$-convexity of $\Gamma$ implies convexity of $\P$. 

We are going to construct a linear transformation $T$ and a proper convex function $f:\R^{q-1} \to \OLR$ such that $\P = T[\epi f]$. To this end,  
let $k\in\ri C$ and $e^1,...,e^{q-1}$ be vectors in $\R^q$ such that $e^1,...,e^{q-1},k$ are linearly independent. Let $T:=(e^1,...,e^{q-1}, k)$ be the nonsingular matrix formed by these vectors and $E:=(e^1,...,e^{q-1})$. 
Let 
\[
\varphi(y):=\inf\cb{r\in\R \st rk-y\in C}
\]
and
\[
f(z):=\inf_{x\in\X}\varphi(\Gamma(x)-Ez). 
\]
Note that the function $\varphi$ is a well known scalarization functional in vector optimization that has a wide range of applications. Hamel \cite{HamelTSF} has written a nice survey about history and properties of this kind of functional.
It is not hard to show that $\varphi$ is a lower semicontinuous sublinear function (see e.g. \cite{HamelTSF}). Moreover, $\varphi$ is proper by the following lemma and $\dom \varphi = \R\cb{k}-C\neq \emptyset$.
\begin{lemma}
$\varphi(y)\neq -\infty$ for all $y\in \R^q$.
\end{lemma}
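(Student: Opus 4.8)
The plan is to show that $\varphi(y) > -\infty$ for every $y \in \R^q$ by exploiting the fact that $C$ is a \emph{proper} cone, i.e., not a linear space, so that $C \cap (-C) \neq C$. The key point is to produce a linear functional that is bounded below on $C$ but takes the value $1$ at the chosen direction $k \in \ri C$; evaluating this functional on the constraint $rk - y \in C$ then gives a lower bound on the admissible $r$.

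First I would argue that, since $C$ is a closed convex cone that is not a linear space, its polar $C^\circ$ contains a vector $v$ with $\scp{v,k} \neq 0$. Indeed, if every $v \in C^\circ$ satisfied $\scp{v,k} = 0$, then $k$ would lie in $(C^\circ)^\circ \cap (-(C^\circ)^\circ) = C \cap (-C)$; combined with $k \in \ri C$ this would force $C = C \cap (-C)$ to be a linear space (the relative interior of $C$ meeting $-C$ makes all of $C$ lie in $-C$), contradicting our standing assumption on $C$. Since $v \in C^\circ$ means $\scp{v,c} \le 0$ for all $c \in C$ (using that $C$ is a cone, the polar inequality $\scp{v,c}\le 1$ upgrades to $\scp{v,c}\le 0$), and $\scp{v,k}\neq 0$, replacing $v$ by $-v/\scp{v,k}$ if necessary — note $k \in \ri C \subseteq C$ forces $\scp{v,k}\le 0$, so in fact $\scp{v,k}<0$ and $-v/\scp{v,k}$ has $\scp{\cdot,k}=1$ while still lying in $-C^\circ$, i.e.\ $\scp{\cdot,c}\ge 0$ on $C$ after the sign flip. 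Let me instead just fix $v\in C^\circ$ with $\scp{v,k}<0$ and work directly: $\scp{v,c}\le 0$ for all $c\in C$.

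Next, for any $y \in \R^q$ and any $r \in \R$ with $rk - y \in C$, I apply $v$ to get $r\scp{v,k} - \scp{v,y} \le 0$, hence $r \ge \scp{v,y}/\scp{v,k}$ (the inequality flips since $\scp{v,k} < 0$). Therefore the set $\cb{r \in \R \st rk - y \in C}$ is bounded below by $\scp{v,y}/\scp{v,k}$, and consequently $\varphi(y) = \inf\cb{r \in \R \st rk - y \in C} \ge \scp{v,y}/\scp{v,k} > -\infty$. This holds for every $y \in \R^q$, which is the claim.

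The main obstacle is the separation-type argument in the first step: carefully deducing from "$C$ is not a linear space" together with "$k \in \ri C$" that there exists $v \in C^\circ$ with $\scp{v,k} \neq 0$. The subtlety is that $C$ need not have nonempty interior, so one must work with the relative interior and the linearity space $C \cap (-C)$, and one should phrase the argument purely in terms of the bipolar theorem $\of{C^\circ}^\circ = C$ (valid since $C$ is a closed convex cone, a fortiori a closed convex set containing the origin) rather than appealing to any interior-point separation. Everything after that is a one-line computation.
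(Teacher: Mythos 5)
Your argument is correct and follows essentially the same route as the paper: both proofs hinge on the observation that $k\in\ri C$ together with $C$ not being a linear space forces $-k\notin C$ (equivalently $k\notin C\cap(-C)$), and then produce a linear functional $v$ with $\scp{v,c}\le 0$ on $C$ and $\scp{v,k}<0$ which bounds the feasible $r$ in $\cb{r\in\R \st rk-y\in C}$ from below. The only difference is packaging — you obtain $v$ via the bipolar theorem applied to $C^\circ$, while the paper strongly separates the point $-k$ from the closed convex set $C$ — which is an immaterial variation.
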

\begin{proof}
We have $-k\not \in C$ since otherwise $0=k-k\in \ri C + C =\ri C$ implying that $C$ is a linear space.
Consequently, we can strongly separate $C$ and $-k$, i.e., there are $v\in\R^q\setminus\cb{0}$, $\alpha < 0$ such that $\scp{v,c} > \alpha > \scp{v,-k}$ for all $c\in C$. Let $y\in\R^q$ and $r:=\of{\scp{v,y}+\alpha}/\scp{v,k}$ then $\scp{v,rk-y}=\alpha$, i.e., $rk-y\not\in C$ implying $\varphi(y)\neq -\infty$.
\end{proof}

The following equivalent descriptions of $f$ will turn out to be useful in the sequel.
\begin{lemma}
\[
f(z)=\inf\cb{r\in\R \st T\begin{pmatrix}z \\ r\end{pmatrix}\in \P}=\inf_{y\in\P}\varphi\of{y-Ez}.
\]
\end{lemma}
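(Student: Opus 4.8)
The plan is to prove the two equalities
\[
f(z)=\inf\cb{r\in\R \st T\begin{pmatrix}z \\ r\end{pmatrix}\in \P}=\inf_{y\in\P}\varphi\of{y-Ez}
\]
by a short chain of rewritings, exploiting only the definitions of $f$, $\varphi$, $T$, $E$ and the structure of $\P=\cl(\Gamma[\X]+C)$, together with the sublinearity and lower semicontinuity of $\varphi$ already recorded above. I would treat the right-hand equality first (it is essentially a matter of unravelling definitions) and then the middle one (which carries the only genuine subtlety, namely the passage to the closure).

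First I would establish $f(z)=\inf_{y\in\P}\varphi(y-Ez)$. Write $g(z):=\inf_{y\in\P}\varphi(y-Ez)$. Since $\Gamma[\X]\subseteq\P$, the definition $f(z)=\inf_{x\in\X}\varphi(\Gamma(x)-Ez)$ gives $f(z)\ge g(z)$ immediately. For the reverse inequality take any $y\in\P$; by definition $y=\lim_n(\Gamma(x_n)+c_n)$ with $x_n\in\X$, $c_n\in C$. Using $C$-monotonicity of $\varphi$ (namely $\varphi(y'-c)\le\varphi(y')$ for $c\in C$, which follows from $rk-(y'-c)\in C\Leftarrow rk-y'\in C$ since $C+C\subseteq C$) we get $\varphi(\Gamma(x_n)-Ez)\ge\varphi(\Gamma(x_n)+c_n-Ez)$, and then lower semicontinuity of $\varphi$ yields $\liminf_n\varphi(\Gamma(x_n)+c_n-Ez)\ge\varphi(y-Ez)$. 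Hence $f(z)\le\varphi(\Gamma(x_n)-Ez)$ cannot be pushed below $\varphi(y-Ez)$ along this sequence; taking the infimum over all $y\in\P$ gives $f(z)\le g(z)$, so $f(z)=g(z)$. (One should note the trivial boundary cases: if no $x\in\X$ has $\Gamma(x)-Ez\in\dom\varphi=\R k-C$ then both sides are $+\infty$, and $\varphi>-\infty$ everywhere by the preceding lemma so no $-\infty$ pathologies arise.)

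Next I would prove $f(z)=\inf\cb{r\in\R \st T(z,r)^\top\in\P}$. The key elementary computation is that, because $T=(E,k)$ as a block matrix, $T\begin{pmatrix}z\\ r\end{pmatrix}=Ez+rk$; therefore, for a fixed $y\in\R^q$, the condition $rk-(y-Ez)\in C$ is exactly $T\begin{pmatrix}z\\r\end{pmatrix}=Ez+rk\in\{y\}+C$… more directly: $r k-(y-Ez)\in C \iff Ez+rk \in y + C$ is the wrong grouping; the clean statement is $rk-(y'-Ez)\in C \iff Ez + rk - y' \in C \iff T\begin{pmatrix} z\\ r\end{pmatrix}\in y' + C$. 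Hence
\[
\varphi(y'-Ez)=\inf\cb{r\in\R \st T\begin{pmatrix}z\\r\end{pmatrix}\in y'+C}.
\]
Now take the infimum over $y'\in\P$ on both sides and use the previously established equality $f(z)=\inf_{y'\in\P}\varphi(y'-Ez)$ on the left. On the right, $\inf_{y'\in\P}\inf\cb{r \st T(z,r)^\top\in y'+C}=\inf\cb{r \st \exists y'\in\P:\ T(z,r)^\top\in y'+C}=\inf\cb{r\st T(z,r)^\top\in\P+C}$, and since $\P+C=\cl(\Gamma[\X]+C)+C\subseteq\cl(\Gamma[\X]+C+C)\subseteq\cl(\Gamma[\X]+C)=\P$ (using $C+C\subseteq C$ and that $\P$ is closed), while obviously $\P\subseteq\P+C$ because $0\in C$, we get $\P+C=\P$. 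Therefore the right-hand side is exactly $\inf\cb{r\in\R\st T(z,r)^\top\in\P}$, completing the proof.

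The step I expect to be the only real obstacle is the closure passage in the first equality: one must be sure that lower semicontinuity of $\varphi$ together with $C$-monotonicity is genuinely enough to cross from $\Gamma[\X]+C$ to its closure without losing anything, i.e.\ that $\inf_{y\in\Gamma[\X]+C}\varphi(y-Ez)=\inf_{y\in\cl(\Gamma[\X]+C)}\varphi(y-Ez)$. The $\ge$ direction is trivial; the $\le$ direction is precisely the sequential argument above, and the point to be careful about is that lower semicontinuity only gives $\liminf\varphi(y_n-Ez)\ge\varphi(y-Ez)$, which is the correct direction for bounding an infimum from below — so the logic goes through, but it is worth spelling out. Everything else is bookkeeping with the block structure of $T$ and the cone identity $\P+C=\P$.
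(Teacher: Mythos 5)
The second half of your argument is fine: the identity $\varphi(y'-Ez)=\inf\cb{r\in\R \st T(z,r)^\top\in \cb{y'}+C}$, the interchange of the two infima, and $\P+C=\P$ are all correct and essentially reproduce the tail of the paper's own chain of equalities. The genuine problem is the closure passage you yourself flag, and your justification does not close it. To prove $f(z)\le\inf_{y\in\P}\varphi(y-Ez)$ you must show $f(z)\le\varphi(y-Ez)$ for every $y=\lim_n(\Gamma(x_n)+c_n)\in\P$. Monotonicity, correctly oriented, gives $\varphi(\Gamma(x_n)-Ez)\le\varphi(\Gamma(x_n)+c_n-Ez)$ (your displayed inequality is the reverse of the monotonicity statement you invoke), hence $f(z)\le\varphi(\Gamma(x_n)+c_n-Ez)$ for every $n$. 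To pass to the limit you would need $\liminf_n\varphi(\Gamma(x_n)+c_n-Ez)\le\varphi(y-Ez)$, i.e.\ \emph{upper} semicontinuity along the sequence. Lower semicontinuity gives exactly the opposite inequality, so from $f(z)\le\liminf_n\varphi(\cdot)$ and $\varphi(y-Ez)\le\liminf_n\varphi(\cdot)$ nothing follows about $f(z)$ versus $\varphi(y-Ez)$; the sentence ``the correct direction for bounding an infimum from below'' conflates the two directions.

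This is not a cosmetic issue that a more careful limit argument could repair: under the stated hypotheses the inequality can genuinely fail when $\Gamma[\X]+C$ is not closed. Take $q=m=2$, $C=\cb{0}\times[0,\infty)$, $k=(0,1)^\top$, $e^1=(1,0)^\top$ (so $T$ is the identity), $\Gamma=\id$, and $\X=\cb{x\in\R^2 \st x_1>0,\; x_2\ge 0}\cup\cb{(0,x_2)^\top \st x_2\ge 1}$, a convex set with $\Gamma[\X]+C=\X$. Then $\varphi(y)=y_2$ if $y_1=0$ and $+\infty$ otherwise, so $f(0)=1$, while $\P=\cl\X=[0,\infty)^2$ gives $\inf\cb{r\in\R \st T(0,r)^\top\in\P}=\inf_{y\in\P}\varphi(y)=0$. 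So no continuity property of $\varphi$ alone can carry the infimum across the closure; one needs an additional condition (for instance closedness of $\Gamma[\X]+C$, or lower semicontinuity of the inf-projection defining $f$), or the equality must be read with the closure of $f$. For comparison, the paper's proof performs this same passage in a single unexplained step, replacing $\Gamma[\X]+C$ by $\P$ inside the infimum along the line $Ez+\R k$; you correctly identified the crux, but your lsc argument runs in the wrong direction, so the step remains a real gap in your proposal.
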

\begin{proof}
\begin{align*}
f(z)&=\inf_{x\in\X}\varphi(\Gamma(x)-Ez)=\inf\cb{r\in \R \st rk-\Gamma(x)+Ez \in C,\; x\in \X}\\
&=\inf\cb{r\in \R \st T\begin{pmatrix}z \\ r\end{pmatrix}\in \Gamma[\X]+C}=\inf\cb{(T^{-1})_q(y) \st y\in\Gamma[\X]+C} \\
&= \inf\cb{(T^{-1})_q(y) \st y\in\P}=\inf\cb{r\in \R \st T\begin{pmatrix}z \\ r\end{pmatrix}\in \P} \\
&=\inf\cb{r\in \R \st Ez + rk \in \P} = \inf\cb{r\in \R \st Ez + rk \in \P + C} \\
&= \inf\cb{r\in \R \st Ez + rk \in \cb{y} + C,\; y\in \P}=\inf_{y\in \P}\varphi\of{y-Ez}.
\end{align*}
\end{proof}

Let $R(z):=\cb{r\in\R \st T\begin{pmatrix}z \\ r\end{pmatrix}\in \P}$. For every $z\in \R^{q-1}$, $R(z)$ is closed since $\P$ is closed and $R(z)$ is an upper set, i.e., $r\in R(z)$ and $r'\ge r$ imply $r'\in R(z)$ due to the definition of $\P$ and $k\in C$. Hence
\[
r\ge f(z) \Leftrightarrow r\in R(z) \Leftrightarrow T\begin{pmatrix}z \\ r\end{pmatrix}\in \P,
\]
i.e., $\epi f =T^{-1}(\P)$ or $\P = T[\epi f]$. Consequently, $f$ is a closed convex function since $\P$ is closed convex and $T$ is linear and continuous. Moreover, $f$ is proper iff $\P$ is nontrivial, i.e., $\emptyset \neq \P\neq \R^q$.

A point $(z,r)\in \epi f$ is minimal with respect to $K=\cb{(z,r)\in \R^{q-1}\times\R \st z=0,\;r\ge 0}$ if and only if $r=f(z)$. Next, we will analyze the minimality properties of the transformed points $T\begin{pmatrix}z \\ f(z)\end{pmatrix}$ in $\P$ with respect to $C$. The following notion turns out to be useful.
\begin{definition}
A point $y\in\P$ is said to be {\em relatively minimal} in $\P$ with respect to $C$ iff $\P\cap\of{\cb{y}- \ri C}=\emptyset$. 

The set of all relatively minimal points in $\P$ with respect to $C$ is denoted by $\rMin_C \P$.
\end{definition}
Note that the set of relative minimal points coincides with the set of weakly minimal points if $C$ has nonempty interior and it coincides with the set of minimal points if $C$ is just a ray.

\begin{proposition}
$y\in\rMin_C \P$ if and only if there is some $z\in\R^{q-1}$ such that 
\[y=T\begin{pmatrix}z \\ f(z)\end{pmatrix}.
\]
\end{proposition}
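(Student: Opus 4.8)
The plan is to exploit the identity $\P = T[\epi f]$ together with the fact that $T$ maps the ordering cone $K = \{(z,r) : z = 0,\ r \geq 0\}$ onto the ray $\R_+ k$, and then relate relative minimality with respect to $C$ to relative minimality with respect to that ray. First I would observe that since $T$ is a linear bijection, every $y \in \P$ has a unique representation $y = T\binom{z}{r}$ with $(z,r) \in \epi f$, i.e.\ $r \geq f(z)$; so proving the claim amounts to showing that $y = T\binom{z}{r}$ is relatively $C$-minimal in $\P$ if and only if $r = f(z)$. The ``if'' direction and the ``only if'' direction will be handled separately, both by translating the condition $\P \cap (\{y\} - \ri C) = \emptyset$ through $T^{-1}$.

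For the ``only if'' direction, suppose $r > f(z)$. Then $T\binom{z}{f(z)} \in \P$ and $y - T\binom{z}{f(z)} = T\binom{0}{r - f(z)} = (r-f(z))k \in \ri C$ because $k \in \ri C$ and $\ri C$ is a cone (using that $C$ is a closed convex cone that is not a linear space, so $\ri C$ is nonempty and a cone). Hence $T\binom{z}{f(z)} \in \P \cap (\{y\} - \ri C)$, so $y \notin \rMin_C \P$. For the ``if'' direction, suppose $r = f(z)$ but, for contradiction, there is $y' \in \P \cap (\{y\} - \ri C)$, say $y - y' = c \in \ri C$. Write $y' = T\binom{z'}{r'}$ with $r' \geq f(z')$. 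The key computation is to express $T^{-1}c$: since $T = (e^1,\dots,e^{q-1},k)$ and $E = (e^1,\dots,e^{q-1})$, one has $c = E(\pi c) + ((T^{-1}c)_q) k$ where $\pi$ denotes the first $q-1$ coordinates of $T^{-1}$, and I would use the defining formula $f(z) = \inf_{y \in \P} \varphi(y - Ez)$ from the preceding lemma, together with the sublinearity and monotonicity of $\varphi$, to derive a contradiction. Concretely, from $y' = Ez' + r' k \in \P$ one gets $f(z) \le \varphi(y' - Ez)$; and $y' - Ez = Ez' + r'k - Ez$, while $y - y' = c \in \ri C$ gives $Ez + f(z)k - Ez' - r'k = c$, so $y' - Ez = f(z)k - c$. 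Then $\varphi(y' - Ez) = \varphi(f(z)k - c) \le f(z) - $ (something strictly positive), because $\varphi(f(z)k) = f(z)$ and subtracting an element of $\ri C$ strictly decreases $\varphi$; this would contradict $f(z) \le \varphi(y' - Ez)$.

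The main obstacle I anticipate is the last strict-inequality step: showing that subtracting an element $c \in \ri C$ from $f(z)k$ strictly decreases the value of $\varphi$, i.e.\ $\varphi(w - c) < \varphi(w)$ for all $w \in \dom\varphi$ and $c \in \ri C$. This is where the hypothesis $k \in \ri C$ and the fact that $C$ is not a linear space really enter. The natural route is: $\varphi(w-c) \le \varphi(w) + \varphi(-c)$ by subadditivity, and then one must argue $\varphi(-c) < 0$ for $c \in \ri C$ — equivalently $\lambda k + c \in C$ for some $\lambda < 0$, which holds because $c \in \ri C$ means a small ball around $c$ in $\mathrm{aff}\, C$ lies in $C$, and $k \in \ri C \subseteq C$ lets us move slightly in the $-k$ direction while staying in $C$ (this is essentially the argument already used in the lemma showing $\varphi > -\infty$, run in the opposite direction). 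Once that strict monotonicity of $\varphi$ along $\ri C$ is established, both directions of the proposition fall out cleanly from the identities $\P = T[\epi f]$ and $f(z) = \inf_{y\in\P}\varphi(y - Ez)$.
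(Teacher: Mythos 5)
Your proof is correct, and it is worth comparing the two halves separately. For the direction ``$r=f(z)$ implies $y\in\rMin_C\P$'' you and the paper use the same key fact in different packaging: the paper takes $y'\in\P$ with $y-y'\in\ri C$ and prolongs the segment from $k$ through $y-y'$ to get $y-y'-tk\in C$ for some $t>0$, hence $\varphi(y'-Ez)\le f(z)-t<f(z)$, contradicting $f(z)=\inf_{y\in\P}\varphi(y-Ez)$; your route proves exactly that inequality in the form $\varphi(-c)<0$ for $c\in\ri C$ (perturbing $c$ by $-\varepsilon k$ inside $\mathrm{aff}\,C$, which is a linear subspace because $0\in C$) and then combines it with subadditivity of $\varphi$ and $\varphi(f(z)k)=f(z)$ — same substance, just run through the sublinearity of $\varphi$. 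For the converse the paper again argues via $\varphi$: if $f(\bar z)<\bar r$ it picks $y\in\P$ with $\varphi(y-E\bar z)<\bar r$, extracts $r<\bar r$ with $rk-y+E\bar z\in C$, and gets $\bar y-y\in C+\ri C=\ri C$. You instead exhibit the dominating point directly: $T\binom{z}{f(z)}\in\P$ and $y-T\binom{z}{f(z)}=(r-f(z))k\in\ri C$. This is shorter and makes transparent that only $\P=T[\epi f]$, closedness of $f$ and $k\in\ri C$ are needed; the one caveat is that it presupposes $(z,f(z))\in\epi f$, i.e.\ $f(z)>-\infty$, whereas the paper's infimum argument covers the degenerate improper case automatically. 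That case is repaired in one line (if $f(z)=-\infty$ take the witness $T\binom{z}{r-1}\in\P$, so that $y-T\binom{z}{r-1}=k\in\ri C$), so it is a cosmetic rather than a substantive gap.
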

\begin{proof}
First, assume that $y=T\begin{pmatrix}z \\ f(z)\end{pmatrix}=Ez+f(z)k$ and $y\not\in \rMin_C \P$. Then there is some $y'\in \P$ such that $y-y'\in \ri C$. $y-y' \in \ri C$ and $k\in C$ imply the existence of some $\mu > 1$ such that $(1-\mu)k +\mu(y-y')\in C$, i.e., $y-y'-tk \in C$ with $t=(\mu-1)/\mu > 0$. 
Hence, we get $C \ni y-y'-tk=Ez-y'+(f(z)-t)k$, i.e., $\varphi (y'-Ez)\le f(z)-t < f(z)$ contradicting $f(z)=\inf_{y\in\P}\varphi\of{y-Ez}$.

On the other hand, let $\bar{y}\in \rMin_C \P$ and $\begin{pmatrix}\bar z \\ \bar r\end{pmatrix}:=T^{-1}\bar y$, i.e., $\bar y =E\bar z + \bar r k$. We will show that $\bar r = f(\bar z)$. Assume to the contrary that 
\[
\inf_{y\in\P}\varphi\of{y-E\bar z} = f(\bar z) <\bar r.
\]
Then there is some $y \in \P$ with $\varphi\of{y-E\bar z} < \bar r$ and, by definition of $\varphi$, some $r\in \R$ with $rk - y  + E\bar z \in C$ and $r < \bar r$. Hence
\begin{align*}
\bar y - y = E\bar z +\bar r k - y = (rk-y+E\bar z)+(\bar r -r)k\in\ri C
\end{align*}
since $C$ is a convex cone and $k\in \ri C$. But $\bar y - y\in \ri C$ contradicts $\bar{y}\in \rMin_C \P$.
\end{proof}

Next, we give a characterization of the conjugate of $f$. 
\begin{proposition}
\[
f^*(w)=\begin{cases}\sqb{\scp{c^*(-w),\Gamma}+\delta_\X}^*(0) & \text{if }c^*(-w)\in C^+ \\ \infty & \text{otherwise}\end{cases}
\]
where $c^*(w):=T^{-T}\begin{pmatrix} w \\ 1 \end{pmatrix}$ and $C^+:=\cb{c^*\in\R^q \st \forall c\in C : \scp{c^*,c} \ge 0}$ is the positive dual cone of $C$.
\end{proposition}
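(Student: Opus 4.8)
The plan is to compute $f^*$ directly from its definition and recognise it as a support function of $\P$.

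First I would pass from the conjugate to the epigraph, writing
\[
f^*(w)=\sup_{z\in\R^{q-1}}\of{\scp{z,w}-f(z)}=\sup_{(z,r)\in\epi f}\of{\scp{z,w}-r},
\]
and then use the identity $\epi f=T^{-1}[\P]$ already established: as $(z,r)$ runs through $\epi f$ the point $y:=T\begin{pmatrix}z\\r\end{pmatrix}$ runs through $\P$, with $\begin{pmatrix}z\\r\end{pmatrix}=T^{-1}y$. Since
\[
\scp{z,w}-r=\scp{\begin{pmatrix}z\\r\end{pmatrix},\begin{pmatrix}w\\-1\end{pmatrix}}=\scp{y,\,T^{-T}\begin{pmatrix}w\\-1\end{pmatrix}}=\scp{-c^*(-w),\,y},
\]
where the last equality uses $T^{-T}\begin{pmatrix}w\\-1\end{pmatrix}=-T^{-T}\begin{pmatrix}-w\\1\end{pmatrix}=-c^*(-w)$, this yields $f^*(w)=\sup_{y\in\P}\scp{-c^*(-w),y}$, the value at $-c^*(-w)$ of the support function of $\P$.

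Next I would use that support functions are insensitive to taking closures and are additive over Minkowski sums, so that
\[
f^*(w)=\sup_{y\in\Gamma[\X]+C}\scp{-c^*(-w),y}=\sup_{x\in\X}\scp{-c^*(-w),\Gamma(x)}+\sup_{c\in C}\scp{-c^*(-w),c}.
\]
The cone term $\sup_{c\in C}\scp{-c^*(-w),c}$ is $0$ if $\scp{-c^*(-w),c}\le 0$ for every $c\in C$, i.e.\ if $c^*(-w)\in C^+$, and $+\infty$ otherwise (scale any $c$ with $\scp{c^*(-w),c}<0$). Since $0\in C$, in the case $c^*(-w)\in C^+$ the value of $f^*(w)$ is exactly the remaining term, and that term equals
\[
\sup_{x\in\R^m}\of{\scp{0,x}-\ofg{\scp{c^*(-w),\Gamma(x)}+\delta_\X(x)}}=\sqb{\scp{c^*(-w),\Gamma}+\delta_\X}^*(0),
\]
which is the claimed formula.

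The computation is essentially bookkeeping. The only points that need care are the transpose/reflection identity $T^{-T}\begin{pmatrix}w\\-1\end{pmatrix}=-c^*(-w)$ — so that the minus sign and the argument $-w$ emerge in the right places — and the two elementary facts about support functions used above (invariance under closure, additivity over sums), together with the observation that $0\in C$ makes the dichotomy sharp. No convexity qualification or duality theorem is needed here; the standing assumption that $f$ is proper (equivalently $\P$ nontrivial) merely guarantees that $f^*$ is the genuine Legendre-Fenchel conjugate, the identities above being formally valid in any case.
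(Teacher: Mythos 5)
Your proof is correct, and it takes a genuinely different route from the paper's. The paper writes $f$ as the infimal projection $f(z)=\inf_x\sqb{\varphi(\Gamma(x)-Ez)+\delta_\X(x)}$, so that $f^*(w)=\Phi^*(0,w)$, and then invokes a composite-conjugation result (Theorem 2.8.10 in Z\u{a}linescu) together with the known conjugate $\varphi^*=\delta_{\cb{c^*\in C^+ \st \scp{c^*,k}=1}}$; the resulting minimum over $c^*\in C^+$ with $E^Tc^*=-w$, $k^Tc^*=1$ then collapses to the single point $c^*(-w)$. You instead read $f^*(w)$ off as the support function of $\P=T[\epi f]$ evaluated at $-c^*(-w)=T^{-T}\begin{pmatrix}w\\-1\end{pmatrix}$ and split it over the Minkowski sum $\Gamma[\X]+C$. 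All the steps check out: the identity $f^*(w)=\sup_{(z,r)\in\epi f}\of{\scp{z,w}-r}$, the change of variables through $T$, the invariance of the supremum under taking the closure, the additivity over the sum (well defined here because $\X\neq\emptyset$ keeps the first term above $-\infty$ and $0\in C$ makes the cone term nonnegative), and the dichotomy $\sup_{c\in C}\scp{-c^*(-w),c}\in\cb{0,+\infty}$ according to whether $c^*(-w)\in C^+$. What your route buys is that it avoids the constraint qualification that must be verified to apply the cited theorem, and it makes the remark following the proposition (that $-f^*(w)$ is the optimal value of the primal problem scalarized by $c^*(-w)$) immediate; it also exhibits $-\epi f^*=\D$ transparently as the collection of supporting half-space data for $\P$. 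The paper's Lagrangian-style derivation is heavier but produces the dual multiplier description in a form that directly mirrors scalar conjugate duality. Your closing remark about dropping properness should be treated with care (if $\X=\emptyset$ the sum rule degenerates), but this situation is excluded by the standing assumptions, so it does not affect the argument.
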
 
\begin{proof}
We have 
\[
f(z)=\inf_{x\in\R^q}\Phi(x,z)\quad \text{ with }\quad \Phi(x,z):=\varphi(M(x)-Ez)+\delta_\X(x)
\]
hence $f^*(w)=\Phi^*(0,w)$ (\cite[Theorem 2.6.1]{Zalinescu}). We apply \cite[Theorem 2.8.10]{Zalinescu} in order to calculate $\Phi^*$. It is not hard to show that all assumptions of the theorem are satisfied, in particular, we have $D=\R^q$ hence condition (vi) is satisfied. Thus, we obtain.
\[
\Phi^*(0,w)=\min\cb{\sqb{\scp{c^*,\Gamma(x)-Ez}+\delta_{\X}(x)}^*(0,w)+\varphi^*(c^*)\st c^*\in C^+}.
\]
Moreover, we have 
\[
\varphi^*(c^*)=\begin{cases}0 &\text{ if } c^*\in C^+,\;\scp{c^*,k}=1 \\ \infty & \text{ otherwise.} \end{cases}
\]
(see e.g. \cite[Corollary 9]{HamelTSF}). Hence we get
\[\begin{split}
f^*(w)&=\min\cb{\sqb{\scp{c^*,\Gamma(x)-Ez}+\delta_{\X}(x)}^*(0,w)\st c^*\in C^+,\;\scp{c^*,k}=1}\\
&=\min\cb{\sqb{\scp{c^*,\Gamma}+\delta_{\X}}^*(0)+\scp{-c^*,E}^*(w)\st c^*\in C^+,\;\scp{c^*,k}=1}\\
&=\min\cb{\sqb{\scp{c^*,\Gamma}+\delta_{\X}}^*(0)\st c^*\in C^+,\;k^Tc^*=1,\;E^Tc^*=-w}\\
&=\begin{cases}\sqb{\scp{c^*(-w),\Gamma}+\delta_{\X}}^*(0) & \text{if }c^*(-w)=T^{-T}\begin{pmatrix} -w \\ 1 \end{pmatrix}\in C^+ \\ \infty & \text{otherwise.}\end{cases}
%&=\begin{cases}\inf_{u\ge 0}\sqb{\of{c(w)^TM}^*(A^Tu)-b^Tu} & \text{if }c(w):=T^{-T}\begin{pmatrix} -w \\ 1 \end{pmatrix}\in C^+ \\ \infty & \text{otherwise.}\end{cases}
\end{split}\]
\end{proof}

If we define the dual image $\D$ by
\[
\D:=\cb{\begin{pmatrix}w\\s\end{pmatrix}\st c^*(w)\in C^+,s\le -\sqb{\scp{c^*(w),\Gamma}+\delta_{\X}}^*(0)}
\]
then $\D=-\epi f^*$.
\begin{remark}
Note that 
\[
-\sqb{\scp{c^*(w),\Gamma}+\delta_\X}^*(0)=\inf_{x\in\X}\scp{c^*(w),\Gamma(x)},
\]
is nothing else than the optimal value of the primal problem scalarized by the linear funtional $c^*(w)$.
\end{remark}

The linear transformation $T$ provides a one-to-one correspondence between $K$-minimal exposed faces of $\epi f$ and relatively $C$-minimal exposed faces of $\P$ and the mapping $-\id$ sets a one-to-one correspondence between $K$-maximal exposed faces of $\D$ and $K$-minimal exposed faces of $\epi f^*$.

Hence, by Theorem \ref{TGD}, $\widehat{\Psi}$ defined by $\widehat{\Psi}(F^*)= T[\Psi(-F^*)]$ defines an inclusion reversing one-to-one mapping between $K$-maximal exposed faces of $\D$ and relatively $C$-minimal exposed faces of $\P$
with inverse mapping $\widehat{\Psi}^{-1}=-\Psi^*\circ T^{-1}$.

For $y,v\in\R^q$ we define 
\[\psi(y,v):=(v_1,...,v_{q-1},1)T^{-1}y-v_q,
\]
\[
H(v):=\cb{y\in\R^q \st \psi(y,v)=0}\quad \text{and}\quad H^*(y):=\cb{v\in\R^q \st \psi(y,v)=0}.
\]

Then we obtain 
\begin{align}
\widehat{\Psi}(F^*)&=T[\Psi (-F^*)]=\bigcap_{(w,f^*(w))\in -F^*}T\sqb{\cb{(z,f(z))\in\R^q \st w\in\partial f(z)}} \label{Psihat}\\
&=\bigcap_{(w,f^*(w))\in -F^*}T\sqb{\cb{(z,r)\in\R^q \st r=f(z),\; \scp{w,z} = f(z) +f^*(w)}}\nonumber\\
&=\bigcap_{(w,f^*(w))\in -F^*}T\sqb{\cb{(z,r)\in\epi f \st \scp{w,z} = r +f^*(w)}}\nonumber\\
&=\bigcap_{v\in F^*}T\sqb{\cb{(z,r)\in\epi f  \st r - v_q + (v_1,...,v_{q-1})z = 0}}\nonumber\\
&=\bigcap_{v\in F^*}T\sqb{\cb{(z,r)\in\epi f  \st (v_1,...,v_{q-1},1)\begin{pmatrix}z\\r\end{pmatrix}-v_q =0}}\nonumber\\
&=\bigcap_{v\in F^*}\cb{y \in \P \st \psi(y,v)=0}\nonumber\\
&=\bigcap_{v\in F^*}H(v)\cap\P\nonumber
\end{align}
and
\[\begin{split}
\widehat{\Psi}^{-1}(F)&=-\Psi^*(T^{-1}[F])=\bigcap_{(z,f(z))\in T^{-1}[F]}-\cb{(w,f^*(w))\in\R^q \st w\in\partial f(z)}\\
&=\bigcap_{(z,f(z))\in T^{-1}[F]}-\cb{(w,s)\in\R^q \st s=f^*(w),\; \scp{w,z}=f(z)+f^*(w)}\\
&=\bigcap_{(z,f(z))\in T^{-1}[F]}-\cb{(w,s)\in\epi f^* \st -\scp{w,z}+f(z)+s=0}\\
&=\bigcap_{y\in F}-\cb{(w,s)\in\epi f^* \st (-w^T,1)T^{-1}y+s =0}\\
&=\bigcap_{y\in F}\cb{v\in\D \st \psi(y,v)=0}\\
&=\bigcap_{y\in F}H^*(y)\cap \D.
\end{split}\]

In order to obtain second order relations between the sets $\P$ and $D$ from Theorem \ref{LSO} we define indicatrices for these sets. The indicatrix for the set $\P$ at a point $\bar y\in\rMin_C \P$ relative to a normal vector $\eta \in\mathcal{N}_\P(\bar y)$ with $k^T\eta=-1$ depending on the transformation $T$ will be defined by
\[
\ind\nolimits_{\P,T} (\bar y|\eta):=\ind f\of{\widetilde{T^{-1}}\bar y|E^T\eta}
\]
where $\widetilde{T^{-1}}$ collects the first $q-1$ rows of the matrix $T^{-1}$. Note further that $E^T\eta\in \partial f(\widetilde{T^{-1}}\bar y)$ if $\eta \in\mathcal{N}_\P(\bar y)$ with $\scp{\eta,k}=-1$. This can be derived from the fact that 
\begin{equation}\label{n}
w\in \partial f(\widetilde{T^{-1}}\bar y) \quad\Leftrightarrow\quad \begin{pmatrix} w \\ -1 \end{pmatrix} \in \mathcal{N}_{\epi f}(T^{-1}\bar y) \quad\Leftrightarrow\quad T^{-T} \begin{pmatrix} w \\ -1 \end{pmatrix} \in \mathcal{N}_{\P}(\bar y).
\end{equation}
If $f$ is second-order regular and $T$ is an orthogonal matrix, i.e., the transformation $T$ is angle- and length-preserving, a geometric interpretation of $\ind\nolimits_{\P,T} (\bar y|\eta)$ can be given similarly to that of section 3. Let $\zeta$ be a unit vector in $\R^{q-1}$. Then we consider the plane $P$ going through the point $\bar y$ spanned by the direction vectors $k$ and $E\zeta$. Let $\rho_t(\zeta)$ be the circle in $P$ through the points $\bar y$ and $y_t:=T\begin{pmatrix} \widetilde{T^{-1}}\bar y+t\zeta \\ f\of{\widetilde{T^{-1}}\bar y+t\zeta} \end{pmatrix}$ having as a tangent at $\bar y$ the intersection of $P$ with the hyperplane $\cb{y\in\R^q \st \scp{\eta,y-\bar y}=0}$ and let $\bar r (\zeta):=\limsup_{t\searrow 0}\rho_t(\zeta)$. Then
\[
\ind\nolimits_{\P,T} (\bar y|\eta)=\cb{t\zeta\in\R^{q-1} \st \norm{\zeta}=1,\; 0\le t \le
  \frac{\sqrt{\bar r(\zeta)}}{\of{1+\scp{\eta,E\zeta}^2}^{\frac{3}{4}}}}. 
\]

Analogously, for $\bar v\in\Min_K \D$ and $\eta^*\in\mathcal{N}_{\D}(\bar v)$ with $\eta^*_q=1$ we can define
\[
\ind\nolimits_{\D,-\id} (\bar v|\eta^*) = \ind f^*\of{-(\bar v_1,...,\bar v_{q-1})^T |-(\eta^*_1,...,\eta^*_{q-1})^T}
\]
since 
\begin{equation}\label{n*}\begin{split}
-(\eta^*_1,...,\eta^*_{q-1})^T\in \partial f^* \of{-(\bar v_1,...,\bar v_{q-1})^T} 
&\quad\Leftrightarrow\quad -(\eta^*_1,...,\eta^*_{q-1},1)^T\in \mathcal{N}_{\epi f^*}(-\bar v) \\
&\quad\Leftrightarrow\quad (\eta^*_1,...,\eta^*_{q-1},1)^T\in \mathcal{N}_{\D}(\bar v).
\end{split}\end{equation}

Given $v\in F^*$ and $y\in\widehat{\Psi}(F^*)$ for some $K$-maximal proper exposed face $F^*$ of $\D$ one can easily derive from equations \eqref{Psihat}, \eqref{n} and \eqref{n*} that
\[
\eta(v):=-c^*\of{\begin{pmatrix}v_1 \\ \vdots \\ v_{q-1}\end{pmatrix}}=-T^{-T}\begin{pmatrix} v_1 \\ \vdots \\v_{q-1} \\ 1 \end{pmatrix} \in \mathcal{N}_\P(y)
\]
and
\[
\eta^*(y):=\begin{pmatrix}-\widetilde{T^{-1}}y \\ 1 \end{pmatrix}\in \mathcal{N}_\D(v).
\]
Moreover, $\scp{\eta(v),k} = -1$ and $\eta^*_q(y)=1$.  

The above considerations and the results from sections \ref{sect:epi} and \ref{sect:secondorder} lead to the following theorem.
\begin{theorem}\label{thm:MTVO}
The mapping $\widehat{\Psi} : 2^{\R^q} \to 2^{\R^q}$ defined by 
\[
\widehat{\Psi}(F^*)=\bigcap_{v\in F^*}H(v)\cap\P
\]
is an inclusion-reversing one-to-one map between the set of all $K$-maximal exposed faces of $\D$  and the set of all relatively $C$-minimal exposed faces of $\P$ with inverse
\[
\widehat{\Psi}^{-1}(F)=\bigcap_{y\in F}H^*(y)\cap \D.
\]
Moreover, if $f$ is twice epi-differentiable then for every $K$-maximal exposed face $F^*$ of $\D$,
\begin{equation}\label{eq:ind}
\forall v\in F^*, \forall y\in \widehat{\Psi}(F^*):\quad \ind\nolimits_{\D,-\id} (v|\eta^*(y))=\sqb{\ind\nolimits_{\P,T} (y|\eta(v))}^{\circ}
\end{equation}
holds true.
If $\P$ is polyhedral then $\D$ is polyhedral as well and \eqref{eq:ind} implies
\[
\dim F^* + \dim \hat{\Psi}(F^*)=q-1.
\]
\end{theorem}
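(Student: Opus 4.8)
The plan is to read the whole statement off the epigraphical duality map $\Psi$ of Theorem~\ref{TGD} by transporting it along the two linear bijections already in play, and then to obtain the second-order part from Theorem~\ref{LSO} and the dimension formula from the polyhedral analysis of Section~\ref{subsect:PCF}. For the bijection claim, note first that $T$ is a nonsingular matrix with $T[\epi f]=\P$, so it carries supporting hyperplanes of $\epi f$ to supporting hyperplanes of $\P$ (via $(v,\alpha)\mapsto(T^{-T}v,\alpha)$) and hence induces a bijection between the exposed faces of $\epi f$ and those of $\P$; since $T$ maps the set of $K$-minimal points of $\epi f$, i.e., the graph $\{(z,f(z))\}$, bijectively onto $\rMin_C\P$ by the proposition characterising relatively minimal points, this restricts to a bijection between $K$-minimal exposed faces of $\epi f$ and relatively $C$-minimal exposed faces of $\P$. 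In the same way $-\id$ is a linear bijection with $(-\id)[\D]=\epi f^*$ carrying $K$-maximal exposed faces of $\D$ onto $K$-minimal exposed faces of $\epi f^*$. Composing (with $f$ proper, which holds since $\P$ is nontrivial), $\widehat\Psi=T\circ\Psi\circ(-\id)$ is inclusion reversing and one-to-one between $K$-maximal exposed faces of $\D$ and relatively $C$-minimal exposed faces of $\P$, with inverse $-\Psi^*\circ T^{-1}$; the representations $\widehat\Psi(F^*)=\bigcap_{v\in F^*}H(v)\cap\P$ and $\widehat\Psi^{-1}(F)=\bigcap_{y\in F}H^*(y)\cap\D$ are exactly the computations carried out in \eqref{Psihat} and the display following it.

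For the indicatrix identity \eqref{eq:ind}, fix $v\in F^*$ and $y\in\widehat\Psi(F^*)$ and put $z:=\widetilde{T^{-1}}y$ and $w:=-(v_1,\dots,v_{q-1})^T$. Because $F^*$ is $K$-maximal every $v\in F^*$ satisfies $v_q=-f^*(w)$, so $-v=(w,f^*(w))\in-F^*$; and because $y\in\widehat\Psi(F^*)=T[\Psi(-F^*)]$ one has $T^{-1}y=(z,f(z))\in\Psi(-F^*)$. The defining formula for $\Psi$ then yields $w\in\partial f(z)$ (equivalently $z\in\partial f^*(w)$). Using that the first $q-1$ rows of $T^T$ form $E^T$ and its last row is $k^T$, a short computation gives $E^T\eta(v)=w$ and $-(\eta^*_1(y),\dots,\eta^*_{q-1}(y))^T=z$, so by the very definitions of the two set-valued indicatrices, $\ind\nolimits_{\P,T}(y|\eta(v))=\ind f(z|w)$ and $\ind\nolimits_{\D,-\id}(v|\eta^*(y))=\ind f^*(w|z)$. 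Since $f$ is twice epi-differentiable at $z$ relative to $w\in\partial f(z)$, Theorem~\ref{LSO} gives $\ind f(z|w)=(\ind f^*(w|z))^\circ$; taking polars (legitimate because each indicatrix is a closed convex set containing the origin) turns this into $\ind f^*(w|z)=(\ind f(z|w))^\circ$, which is precisely \eqref{eq:ind}.

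For the polyhedral case, if $\P$ is polyhedral then $\epi f=T^{-1}[\P]$ is polyhedral, hence $f$ is a polyhedral convex function, hence $f^*$ is polyhedral convex, $\epi f^*$ is polyhedral and $\D=-\epi f^*$ is polyhedral. Choose $v\in\ri F^*$ and $y\in\ri\widehat\Psi(F^*)$; then $(w,f^*(w))=-v\in\ri(-F^*)$ and $(z,f(z))=T^{-1}y\in\ri\Psi(-F^*)$, so the analysis of Section~\ref{subsect:PCF} applies and shows that $\ind\nolimits_{\P,T}(y|\eta(v))=\ind f(z|w)$ and $\ind\nolimits_{\D,-\id}(v|\eta^*(y))=\ind f^*(w|z)$ are linear subspaces of $\R^{q-1}$ with $\dim\ind f(z|w)=\dim\Psi(-F^*)=\dim\widehat\Psi(F^*)$ and $\dim\ind f^*(w|z)=\dim(-F^*)=\dim F^*$ (dimensions being unchanged by the linear bijections $T$ and $-\id$). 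By \eqref{eq:ind} these two subspaces are polar to each other, i.e., orthogonal complements in $\R^{q-1}$, so their dimensions add up to $q-1$; hence $\dim F^*+\dim\widehat\Psi(F^*)=q-1$.

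The whole content is already carried by Theorems~\ref{TGD} and~\ref{LSO} and by Section~\ref{subsect:PCF}; the only genuine work, and the point I expect to be the main obstacle, is the bookkeeping that (i) checks $T$ and $-\id$ really transport the relevant minimality, maximality and exposed-face structure, and (ii) verifies that under the correspondence $y\leftrightarrow(z,f(z))$, $v\leftrightarrow(w,f^*(w))$ with $w\in\partial f(z)$ the abstractly defined objects $\ind\nolimits_{\P,T}(y|\eta(v))$, $\ind\nolimits_{\D,-\id}(v|\eta^*(y))$, $\eta(v)$ and $\eta^*(y)$ collapse to $\ind f(z|w)$ and $\ind f^*(w|z)$. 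Getting the signs and the transposes hidden in $c^*$, $\eta(v)$ and $\eta^*(y)$ to line up is where care is needed; once that identification is in place, every assertion follows immediately from the cited results.
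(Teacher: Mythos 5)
Your proof is correct and follows essentially the same route as the paper, which likewise obtains the bijection by transporting $\Psi$ from Theorem~\ref{TGD} along $T$ and $-\id$ (the computations in \eqref{Psihat} and the subsequent display), gets \eqref{eq:ind} from Theorem~\ref{LSO} via the identifications \eqref{n}, \eqref{n*}, and derives the dimension formula from the subspace analysis of Section~\ref{subsect:PCF}. Your explicit bookkeeping showing $E^T\eta(v)=w$ and $-(\eta^*_1(y),\dots,\eta^*_{q-1}(y))^T=z$ is exactly the verification the paper leaves implicit.
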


\begin{example}
We consider the special case of a linear vector optimization problem, i.e., $\Gamma$ is a linear operator and $\X=\cb{x\in\R^m \st Ax \ge b}$ with $A\in \R^{p\times m}$, $b\in \R^p$ for some $p\in \N$. Moreover, we assume that $k\in \ri C$ can be chosen such that $k_q=1$.

Then we can chose 
\[
E=\begin{pmatrix} 1 & \cdots & 0 \\ \vdots & \ddots & \vdots \\ 0 & \cdots & 1 \\ 0 & \cdots & 0 \end{pmatrix} \text{ i.e. }\quad T=\begin{pmatrix} 1 & \cdots & 0 & k_1 \\ \vdots & \ddots & \vdots & \vdots\\ 0 & \cdots & 1 & k_{q-1} \\ 0 & \cdots & 0 & 1 \end{pmatrix} \text{ and }\quad
T^{-1}=\begin{pmatrix} 1 & \cdots & 0 & -k_1 \\ \vdots & \ddots & \vdots & \vdots\\ 0 & \cdots & 1 & -k_{q-1} \\ 0 & \cdots & 0 & 1 \end{pmatrix}.
\]
Hence
\begin{equation}\label{eq:cw}
c^*(w)=T^{-T}\begin{pmatrix} w \\ 1 \end{pmatrix}=\begin{pmatrix} w \\ 1-\sum_{i=1}^{q-1} k_i w_i \end{pmatrix}.
\end{equation}
From duality for scalar linear optimization problems we obtain
\[
-\sqb{\scp{c^*(w),\Gamma}+\delta_\X}^*(0)=\inf\cb{c^*(w)^T\Gamma(x) \st Ax \ge b} =\sup\cb{b^T u \st u\ge 0, A^Tu=\Gamma^Tc^*(w)}
\]
where the supremum is either a maximum or $-\infty$. $+\infty$ is impossible since we assume that $\X$ is nonempty. Thus we obtain
\begin{align*}
\D&=\cb{\begin{pmatrix}w\\s\end{pmatrix}\st c^*(w)\in C^+,\;s\le -\sqb{c^*(w)^T\Gamma+\delta_{\X}}^*(0)} \\
&=\cb{\begin{pmatrix}w\\s\end{pmatrix}\st c^*(w)\in C^+,\;s\le \sup\cb{b^T u \st  u\ge 0, A^Tu=\Gamma^Tc^*(w)}}\\
&=\cb{\begin{pmatrix}c^*_1 \\ \vdots \\ c^*_{q-1}\\b^T u -r\end{pmatrix}\st c^*\in C^+,\; u\ge 0,\; A^Tu=\Gamma^Tc^*,\; k^Tc^* =1,\; r\ge 0}
\end{align*}
since we can solve \eqref{eq:cw} for $w$ by $w=(c^*_1,...,c^*_{q-1})^T$ iff $c^*_q = 1-\sum_{i=1}^{q-1} k_i c^*_i$, i.e., $k^Tc^*=1$.

Moreover, we obtain
\begin{align*}
\psi(y,v)&=(v_1,...,v_{q-1},1)T^{-1}y-v_q\\
&=\sum_{i=1}^{q-1}v_i(y_i-k_iy_q) + y_q - v_q\\
&=\sum_{i=1}^{q-1}y_iv_i + y_q\of{1-\sum_{i=1}^{q-1}k_iv_i} - v_q.
\end{align*}
\end{example}

\bibliographystyle{abbrv}
\bibliography{GD_general}

%\begin{thebibliography}{99}
%\addcontentsline{toc}{chapter}{Bibliography}
%\markboth{Bibliography}{Bibliography}
%
%\bibitem{Busemann58}
%H. Busemann, {\it Convex surfaces,} Interscience, New York, 1958.
%
%\bibitem{HUSe89a}
%J.-B. Hiriart-Urruty and A. Seeger, {\it The second-order subdifferential and the Dupin indicatrices of a non-differentiable convex function}, Proc. London Math. Soc. {\bf 58} (1989), 351--365.
%
%%\bibitem{HUSe89b}
%%J.-B. Hiriart-Urruty and A. Seeger, {\it Calculus rules on a new set-valued second order derivative for convex functions}, Nonlinear Anal., Theory Methods Appl. {\bf 13} (1989), 721--738.
%
%\bibitem{Rockafellar88}	
%R. T. Rockafellar,  {\it First- and second-order epi-differentiability in nonlinear programming}, Trans. Amer. Math. Soc. {\bf 307} (1988), 75--108.
%
%\bibitem{Rockafellar90}	
%R. T. Rockafellar,  {\it Generalized second derivatives of convex functions and saddle functions}, Trans. Amer. Math. Soc. {\bf 322} (1990), 51--77.
%
%\bibitem{RoWe98}	
%R. T. Rockafellar and R. J.-B. Wets,  {\it Variational analysis}, Springer, Berlin, 1998.
%
%\bibitem{Seeger92a}
%A. Seeger, {\it Second derivatives of a convex function and of its Legendre-Fenchel transformate,} SIAM J. Optim. {\bf 2} (1992), 405--424.
%
%%\bibitem{Seeger92b}
%%A. Seeger, {\it Limiting behavior of the approximate subdifferential of a convex function,} J. Optim. Theory Appl. {\bf 74} (1992), 527--544.
%
%\bibitem{Seeger94}
%A. Seeger, {\it Second-order normal vectors to a convex epigraph,} Bull. Austral. Math. Soc. {\bf 50} (1994), 123--134.
%
%\end{thebibliography}

\end{document}